\documentclass[12pt, a4paper, reqno]{amsart}
\usepackage{tikz}
\usepackage{url}
\usetikzlibrary{arrows.meta}
\usepackage{amsmath,amsthm,amsfonts,amssymb,latexsym,mathrsfs,color,extarrows}
\usepackage{color, graphicx, comment}
\allowdisplaybreaks

\newtheorem{Theorem}{Theorem}%[section]
\newtheorem{Corollary}[Theorem]{Corollary}
\newtheorem{Proposition}[Theorem]{Proposition}

\DeclareMathOperator{\des}{des}
\DeclareMathOperator{\inv}{inv}
\DeclareMathOperator{\ssum}{sum}
\DeclareMathOperator{\dist}{dist}
\DeclareMathOperator{\maj}{maj}
\DeclareMathOperator{\noz}{noz}
\DeclareMathOperator{\tel}{tel}
\DeclareMathOperator{\uel}{uel}

\newcommand{\tf}[2]{f_{#1}^{(#2)}}
\newcommand{\tg}[2]{g_{#1}^{(#2)}}

\usepackage{graphicx} % 必须加载
\newcommand{\qbinom}[2]{\left[ \genfrac{}{}{0pt}{}{#1}{#2} \right]_q}

\title{The inversion number statistic for inversion sequences}

\author{Lora R. Du}
\address{Center for Applied Mathematics and KL-AAGDM,
Tianjin University,
Tianjin 300072, P.R. China
}
\email{loradu@tju.edu.cn}
\author{Guo-Niu HAN}
\address{I.R.M.A., UMR 7501, Universit\'e de Strasbourg
et CNRS, 7 rue Ren\'e Descartes, F-67084 Strasbourg, France}
\email{guoniu.han@unistra.fr}

\subjclass[2020]{05A05, 05A10, 05A15, 05A19, 05A30}

\keywords{Inversion sequences, inversions, permutations, Catalan numbers, involutions, $q$-derivative operator}

\date{April 10, 2026}

\begin{document}

\begin{abstract}
Inversion sequences, also known as subexcedant sequences, form a fundamental class of objects in enumerative combinatorics.
In this paper, we study the joint distribution of five statistics on inversion sequences.
While several statistics on inversion sequences have been extensively investigated, our contribution is to introduce the inversion number statistic, originally defined for permutations, into the context of inversion sequences. As special cases, we recover classical permutation statistics, including the Stirling, Mahonian and Eulerian distributions, as well as the Catalan and Narayana numbers.  Somewhat unexpectedly, our specializations also include the number of involutions in the symmetric group.

Our study arises from a $q$-analog of Comtet's expansion formula obtained by substituting the classical derivative operator $D$ with 
the $q$-derivative operator $D_q$.
\end{abstract}

\maketitle

\section{Introduction}
Inversion sequences, also known as subexcedant sequences, form a fundamental class of objects in enumerative combinatorics \cite{Baril-Vajnovszki-2017, Corteel-Martinez-Savage-2016, Huh-Kim-Seo-Shin-2026, Kim-Lin-2017, Mantaci-Rakotondrajao-2001, Martinez-Savage-2018, Savage-Schuster-2012}.
An {\it inversion sequence} of length $n$ is a sequence $e = (e_0, e_1, \ldots, e_{n-1})$ of nonnegative integers such that $0 \le e_i \leq  i$ for each~$i$. The set of all inversion sequences of length $n$ is denoted by $I_n$. An inversion sequence can be represented  geometrically, as illustrated in Fig.~\ref{exam-statis} for the inversion sequence $e=(0,0,0,2,4,0,5)$. 
\begin{figure}[htbp]
    \centering
    \begin{tikzpicture}[
    scale=0.6,  square/.style={draw=black, minimum size=0.6cm, inner sep=0}]
    \foreach \y/\n in {0/7, 1/6, 2/5, 3/4, 4/3, 5/2, 6/1} {
        \foreach \x in {1,...,\n} {  
            \node[square] at (8-\x, \y) {};
        }
        \node at (8, \y) {\small \y};
    }
    \foreach \pos in {(7,5), (6,0), (5,4), (4,2), (2,0), (1,0), (3,0)} {
        \node  at \pos {$\bullet$};
    }
    \end{tikzpicture}
    \caption{A geometric representation of the inversion sequence.}
    \label{exam-statis}
\end{figure}
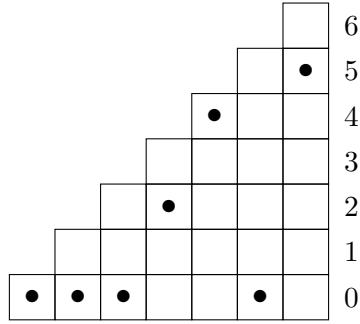
For an inversion sequence $e=(e_0, e_1, \ldots, e_{n-1})\in I_n$, we define the following five statistics: 
\begin{align*}
\inv(e) &= \bigl|\{(i,j) : i<j,\ e_i>e_j\}\bigr|,\\[5pt]
\ssum(e) &= \sum_{i=0}^{n-1} e_i,\\[5pt]
\noz(e) &= \bigl|\{j : e_j=0\}\bigr|,\\[5pt]
\tel(e) &= n - {\rm dist}(e),\\[5pt]
\uel(e) &= n - \max(e)-1,
\end{align*}
where $\dist(e)$ is the number of distinct entries of $e$.

These statistics are, respectively, the number of inversions, the sum of its entries,  the number of zero entries, the total number of empty lines (i.e.,  integers in  $\{0,1,\ldots,n-1\}$ not present in $e$), the number of upper empty lines (i.e., integers  in  $\{0,1,\ldots,n-1\}$ not present in $e$ but greater than $\max(e)$).
For example, the inversion sequence  $e=(0,0,0,2,4,0,5)\in I_7$ has $\inv(e)=2$, ${\rm sum}(e)=11$, $\noz(e)=4$, $\tel(e)=3$, and  $\uel(e)=1$.

While several statistics on inversion sequences have been extensively investigated (see, for example, \cite{Baril-Vajnovszki-2017, Huh-Kim-Seo-Shin-2026, Kim-Lin-2017, Martinez-Savage-2018, Savage-Schuster-2012}), our main contribution is the extension of the inversion number statistic, originally defined for permutations, into the context of inversion sequences. 
To study the joint distribution of these statistics, we define the five-variable polynomial
$$
F_n(x;y,z,p,q)=\sum_{e\in I_n} x^{\noz(e)}y^{\tel(e)}z^{\uel(e)}p^{\ssum(e)}q^{\inv(e)},\quad (n\ge 1),
$$
also denoted by $F_n(x)$ when no ambiguity arises.
The first few values of $F_n(x)$ are listed below:
\begin{align*}
F_1(x) &= x,\\[5pt]
F_2(x) &= x^2yz + xp,\\[5pt]
F_3(x) &= x^3 y^2z^2  + x^2 (pqyz + p^2y+pyz) + x(p^2yz +  p^3).
\end{align*}

We derive a recurrence relation for  $F_n(x)$ using the $q$-derivative operator $D_q$.
For a polynomial $f(x)$ in $x$, the $q$-{\it derivative operator} \cite[p.~22]{GR90} is defined by
\begin{equation}\label{q-operator}
    D_qf(x):=\frac{f(x)-f(xq)}{ x - xq}.
\end{equation}
Associated with this operator is the $q$-exponential operator
$$
T_q = \sum_{k \geq 0} \frac{(D_q)^k}{[k]_q!},
$$
where the $q$-factorial is given by $[0]_q!=1$, and for $k\geq 1$,
$$
[k]_q! = [k]_q [k-1]_q \cdots [1]_q,
$$
with the $q$-integer
$
[k]_q = 1+q+q^2 + \cdots + q^{k-1}
$.

\begin{Theorem} \label{Main-F-rec}
The polynomials $\{F_n(x)\}_{n\geq 1}$ satisfy  $F_1(x) = x$, and for $n\geq 1$,
\begin{equation}\label{F-rec}
F_{n+1}(x) = (z-1)  z^{n-1} y^n  x^{n+1}  
	+ p^nx \Bigl(
	(y-1)F_n\Big(\frac{x}{p}\Big) + T_q(F_n(x))\Big|_{x=\frac{x}{p}}\Bigr).
\end{equation}
\end{Theorem}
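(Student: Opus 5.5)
The plan is to prove \eqref{F-rec} bijectively, building each $e'\in I_{n+1}$ from a pair $(e,S)$. Here $e\in I_n$, and $S$ is a subset of the positions $j$ with $e_j=0$. The construction is
$$
e'=(0,f_1,\ldots,f_n),\qquad f_{i}=\begin{cases}0,& i-1\in S,\\ e_{i-1}+1,& \text{otherwise}.\end{cases}
$$
Since $e_{i-1}\le i-1$, we get $f_i\le i$, so $e'\in I_{n+1}$. The map is bijective. Conversely, $e'_0=0$ always, $S$ is read off as the set of zero positions of the tail, and $e$ is recovered by subtracting $1$ from every positive entry of the tail. Let $m=\noz(e)$ and let $k=m-|S|$ be the number of zeros of $e$ that are promoted to $1$.

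Next I track each statistic under the map.
\begin{itemize}
\item Zeros: $\noz(e')=1+(m-k)$.
\item Sum: $\ssum(e')=\ssum(e)+n-(m-k)$, because every position outside $S$ gains $1$.
\item Inversions: shifting all nonzero entries of $e$ by one preserves their mutual inversions. Promoted zeros (now $1$) are smaller than all other shifted entries, which are at least $2$. Hence the only new inversions are pairs (promoted $1$, later retained $0$). The leading $0$ creates none.
\end{itemize}
Summing over the $\binom{m}{k}$ choices of which zeros are promoted, the pair-count generating function is the $q$-binomial $\qbinom{m}{k}$. Now use $D_q^k x^m/[k]_q!=\qbinom{m}{k}x^{m-k}$. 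Substituting $x\mapsto x/p$ and multiplying by $p^n x$ then yields exactly the weight $x^{1+m-k}p^{\ssum(e)+n-(m-k)}q^{\inv(e)+(\text{new})}$. So, up to the $y,z$ weights, $p^nx\,T_q(F_n(x))|_{x=x/p}$ enumerates all of $I_{n+1}$.

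The main obstacle is the $y$- and $z$-corrections. For distinct values, the value set of $e'$ is $\{0\}\cup(V+1)$, where $V$ is the set of nonzero values of $e$, together with $1$ when $k\ge1$.
\begin{itemize}
\item If $k\ge1$, then $\dist(e')=\dist(e)+1$, so $\tel(e')=\tel(e)$.
\item If $k=0$, then $\dist(e')=\dist(e)$, so $\tel$ increases by one.
\end{itemize}
The $k=0$ term of $T_q$ is the identity term $F_n(x/p)$. Multiplying it by $y$ is achieved by adding $(y-1)F_n(x/p)$, which accounts for the middle term.

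For $\uel$: in general $\max(e')=\max(e)+1$, giving $\uel(e')=n+1-\max(e)-2=\uel(e)$. The single exception is $e=(0,\ldots,0)$ with $k=0$. Then $e'$ is all zeros, and $\max(e')=0$ rather than $1$. The formula so far assigns it weight $x^{n+1}y^{n}z^{n-1}$, while its true weight is $x^{n+1}y^nz^n$. Adding $(z-1)z^{n-1}y^nx^{n+1}$ repairs this, giving \eqref{F-rec}.

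Finally, I will check $F_1(x)=x$ directly and verify the case $n=2$ against the listed $F_3$ as a sanity check.
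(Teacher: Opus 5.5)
Your proposal is correct and follows the paper's proof. It uses the same map $I_n\to I_{n+1}$ (prepend $0$, shift nonzero entries up by one, send each zero to $0$ or $1$), the same $q$-binomial count of new inversions via $T_q(x^m)=\sum_k \qbinom{m}{k}x^{m-k}$, and the same facts that $\tel$ grows by one exactly when no zero is promoted and $\uel$ changes only for the all-zero sequence. The only difference is bookkeeping: you take $p^n x\,T_q(F_n(x))|_{x=x/p}$ as a main term plus two corrections, whereas the paper sums Cases 1a, 1b, 2a, 2b separately, which gives the same identity.
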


The proof of Theorem \ref{Main-F-rec} is presented in Section 2. In Section 3, we demonstrate that $F_n(x)$ serves as a master polynomial unifying several classical combinatorial numbers through  appropriate  structures. When $q=1$, the underlying combinatorial structure corresponds to  the symmetric group. In this case, each choice of parameters yields a different specialization: setting the variables $(x;y,z,p,q)$ to $(x;1,1,1,1)$, $(1;y,1,1,1)$, $(1;1,z,1,1)$ and $(1;1,1,p,1)$ recovers, respectively, generating functions for the Stirling  numbers of the first kind, Mahonian statistics, the distribution of the maximum inversion entry, and the Eulerian numbers on permutations. 
%In this case, the polynomial $F_n(x; y,z,p,1)$ specializes to a polynomial in $x,  y, z, p$ that  refines the Stirling, Mahonian,  the maximum inversion table entry and Eulerian statistics on permutations. 
The case  $q=0$ specializes the model to  Dyck paths. Specifically, setting $x=y=z=p=1$ gives the Catalan number $C_n = F_n(1;1,1,1,0)$. Moreover, the specializations $(x;1,1,1,0)$, $(1;y,1,1,0)$ and $(x;1,x,1,0)$ yield generating functions whose coefficients correspond to the number of returns to ground level, the Narayana numbers, and the sum of the heights of the first and last peaks, respectively. Somewhat unexpectedly, when $q=-1$ and $x=y=z=p=1$, we recover the number of involutions in the symmetric group. 
In Section 4, we derive the generating polynomial for inversions over inversion sequences with a fixed number of occurrences of each entry $0,1,\dots,n-1$.  Moreover, a special family of inversion sequences, in which each nonzero entry appears exactly once, is shown to provide a combinatorial interpretation for the $q$-Stirling numbers of the second kind.  

For $n\geq 1$, let $f_n(q)$ be the generating polynomial for the inversion number statistic of inversion sequences, i.e.,
$
f_n(q)=F_n(1;1,1,1,q)=\sum_{e\in I_n} q^{\inv(e)}.
$
We reproduce the first few values of $f_n(q)$, as well as the specializations when $q=1,0,-1$:
{
\renewcommand{\arraystretch}{1.2}
$$
\begin{array}{| c | c |   c  | c |   c   |   }
\hline
n & f_n(q) & f_n(1) & f_n(0) & f_n(-1)     \\
\hline
 1 &         1  &         1  &    1  &     1 \\
 2 &         2  &         2  &    2  &     2 \\
 3 &       q+5   &         6  &    5  &     4 \\
 4 &       3q^2 + 7q+14   &      24  &    14  &     10 \\
 5 &       3q^4 + 11q^3 + 28q^2 + 36q + 42  &       120  &   42  &    26 \\
\hline
\end{array}
$$
}

\medskip

Our combinatorial framework arises from the construction of a $q$-analog of Comtet's expansion formula. In 1823, Scherk \cite[Appendix A]{Blasiak2011F} derived the following expansion formula  for  $(xD)^n $ :
\begin{equation}\label{eq:Scherk}
(xD)^n  = \sum_{k=0}^n S(n,k) x^k D^k,
\end{equation}
where $S(n,k)$ are the Stirling numbers of the second kind.
Subsequently, in 1973, Comtet \cite{Comtet-1973, Han-Ma-2024} investigated the more general operator $(g(x)D)^n$ and obtained an explicit  formula.  Let $f_k = D^k(f)$  and $g_k = D^k (g)$. Comtet's result can be  written as  
\begin{equation}\label{Ank-def}
(gD)^nf =\sum_{k=1}^nL_{n,k}{f}_k,
\end{equation}
where $L_{n,k}$ depends only on $g$ and its derivatives.  Furthermore, for $1\leqslant k\leqslant n$, the coefficients $L_{n,k}$ are given by 
\begin{equation}\label{Ank-Comtet}
L_{n,k}=\frac{g}{k!}\sum (2-k_1)(3-k_1-k_2)\cdots (n-k_1-\cdots-k_{n-1})\frac{g_{k_1}}{k_1!}\cdots\frac{g_{k_{n-1}}}{k_{n-1}!},
\end{equation}
where the sum is taken over all sequences $(k_1,k_2,\ldots,k_{n-1})$ of nonnegative integers such that
$k_1+k_2+\cdots+k_{n-1}=n-k$ and $k_1+\cdots+k_j\leqslant j$ for any $1\leqslant j\leqslant n-1$.

In Section 5, we show that the expression $(gD_q)^nf$  admits a combinatorial interpretation in terms of inversion sequences. Using the generating polynomial for inversion sequences with fixed multiplicities obtained in Section 4, we arrive naturally at the $q$-analog of Comtet's formula, see Theorem  \ref{th1:q-Comtet}.  Moreover, specializing to $g(x)=x$ yields a $q$-analog of Scherk's formula,  see Theorem \ref{th:q-Scherk}.

\section{Proof of Theorem \ref{Main-F-rec}}

We now describe a construction from $I_n$ to $I_{n+1}$.  Given a sequence $e'=(e_0',\ldots,e_{n-1}')\in I_n$, define $e=(e_0,\ldots,e_n)\in I_{n+1}$ by setting $e_{0} = 0$, and for $0\leq i\leq n-1$, 
$$
e_{i+1} = 
\begin{cases}
  e_{i}'+1, & \text{if } e_{i}'\neq 0,\\[5pt]
  0 \text{ or 1}, &\text{if } e_i' = 0.
\end{cases}
$$
 Each zero entry in $e'$ independently yields a binary choice (0 or 1). Hence, an element $e'\in I_n$ with exactly $k$ zeros generates  $2^k$ distinct sequences in $I_{n+1}$.  It is straightforward to verify that
 this construction produces all elements of $I_{n+1}$. 

Before we track how the statistics encoded by  $x,y,z,p$ and $q$ behave under this construction, we first  apply  the $q$-exponential operator $T_q$ on the monomial $x^n$, which naturally encodes the weighted binary choices arising from zero entries in our construction:
$$
T_q (x^n) = 
\sum_{k=0}^n \frac{[n]_q!}{[n-k]_q!} 
\frac{x^{n-k}}{[k]_q!}
=
\sum_{k=0}^n  \qbinom{n}{k}x^k,
$$
where $\qbinom{n}{k}$ denotes the $q$-binomial coefficient.  
In particular, $\qbinom{n}{k}$ counts binary sequences of length $n$ with $k$ ones and $n-k$ zeros, weighted by the inversion statistic (see \cite{Andrews-1976}). 

Next, we examine how the statistics encoded by  $x,y,z,p$ and $q$ are transformed under this construction.   

 \noindent \textbf{Case 1: The all-zero sequence.} 
    Let $e'=(0,\ldots,0)\in I_n$, with weight 
    \[A(x):=x^ny^{n-1}z^{n-1}.\]
     
    \noindent (1a) All zeros in $e'$ remain $0$, illustrated in Fig.~\ref{Case1a}. Then the number of zeros, the total number of  empty lines,  and the number of upper empty lines each increase by $1$, while the number of inversions and the sum of entries remain unchanged. The contribution is 
   $x^{n+1}y^nz^n = xyzA(x).$ 
    
   \begin{figure}[htbp]
    \centering
    \begin{tikzpicture}[baseline=0pt,
    scale=0.6,  square/.style={draw=black, minimum size=0.6cm, inner sep=0}]
    \foreach \y/\n in { 0/5, 1/4, 2/3, 3/2, 4/1} {
        \foreach \x in {1,...,\n} {  
            \node[square] at (6-\x, \y) {};
        }
       \node at (6, \y) {\small \y};
    }
    \foreach \pos in {  (5,0), (4,0), (2,0), (1,0), (3,0)} {
        \node  at \pos {$\bullet$};
    }
    \end{tikzpicture}\qquad %  
    \begin{tikzpicture}[baseline=-1cm]  
       \draw[-{Stealth[scale=1.2]}, thick] (0,0) -- (1.4,0); 
    \end{tikzpicture}  
    \begin{tikzpicture}[baseline=0.6cm ,
    scale=0.6,  square/.style={draw=black, minimum size=0.6cm, inner sep=0}]
    \foreach \y/\n in { 0/6, 1/5, 2/4, 3/3, 4/2,5/1} {
        \foreach \x in {1,...,\n} {  
            \node[square] at (7-\x, \y) {};
        }
        \node at (7, \y) {\small \y};
    }
    \foreach \pos in { (6,0), (5,0), (4,0), (2,0), (1,0), (3,0)} {
        \node  at \pos {$\bullet$};
    }
    %  \foreach \pos in { (6,1), (5,1), (4,1), (2,1) , (3,1)} {
    %     \node  at \pos {$\textcolor{red}{\circ}$};
    % }

    % \draw[dashed, ->](6,0.8) -- (6,0.2); \draw[dashed, ->](5,0.8) -- (5,0.2);
    %  \draw[dashed, ->](4,0.8) -- (4,0.2); \draw[dashed, ->](3,0.8) -- (3,0.2); \draw[dashed, ->] (2,0.8) -- (2,0.2);
    \end{tikzpicture}
    \caption{The mapping $e' \to e$ in Case 1a.}
    \label{Case1a}
\end{figure}

\medskip 
\noindent (1b) At least one zero in $e'$ is mapped to $1$, shown in Fig.~\ref{Case1b}. Then the total number of empty lines and the number of upper empty lines  remain invariant. Suppose $j$ $(0\leq j\leq n-1)$ zeros remain $0$ and  $n-j$ zeros become $1$. The sum of entries increases  by $n-j$ and the $q$-binomial coefficient~$\qbinom{n}{j}$ accounts for the inversions. Summing over  $0\leq j\leq  n-1$,  the contribution is 
\begin{align*}
xy^{n-1}z^{n-1}\sum_{j=0}^{n-1}\qbinom{n}{j}x^j p^{n-j} &= xy^{n-1}z^{n-1} p^n   \sum_{j=0}^{n} \qbinom{n}{j} \left(\frac{x}{p}\right)^j -  xA(x)  \\[5pt]
&= xp^n T_q(A(x)) \Big|_{x = \frac{x}{p}} - xA(x).
\end{align*}

  \begin{figure}[htbp]
    \centering
    \begin{tikzpicture}[baseline=0pt,
    scale=0.6,  square/.style={draw=black, minimum size=0.6cm, inner sep=0}]
    \foreach \y/\n in { 0/5, 1/4, 2/3, 3/2, 4/1} {
        \foreach \x in {1,...,\n} {  
            \node[square] at (6-\x, \y) {};
        }
        \node at (6, \y) {\small \y};
    }
    \foreach \pos in {  (5,0), (4,0), (2,0), (1,0), (3,0)} {
        \node  at \pos {$\bullet$};
    }
    \end{tikzpicture}\qquad %  
    \begin{tikzpicture}[baseline=-1cm]  
       \draw[-{Stealth[scale=1.2]}, thick] (0,0) -- (1.4,0); 
    \end{tikzpicture}  
    \begin{tikzpicture}[baseline=0.6cm ,
    scale=0.6,  square/.style={draw=black, minimum size=0.6cm, inner sep=0}]
    \foreach \y/\n in { 0/6, 1/5, 2/4, 3/3, 4/2,5/1} {
        \foreach \x in {1,...,\n} {  
            \node[square] at (7-\x, \y) {};
        }
        \node at (7, \y) {\small \y};
    }
    \foreach \pos in { (6,0), (5,1), (4,0), (2,0), (1,0), (3,1)} {
        \node  at \pos {$\bullet$};
    }
    \end{tikzpicture}
    \caption{The mapping $e' \to e$ in Case 1b.}
    \label{Case1b}
\end{figure}

\noindent \textbf{Case 2: General sequences with $1\leq k<n$ zeros.}
Let $ I_{n}^{(k)}$ be the subset of $I_n$ with exactly $k$ zeros. For $e'\in I_{n}^{(k)}$, each of the $n-k$ non-zero entries is  incremented by $1$, contributing a factor of $p^{n-k}$.  

    \smallskip 
  \noindent (2a) All $k$ zeros in $e'$ are mapped to $0$, see Fig.~\ref{Case2a}. Both the number of  zeros and the  total number of empty lines increase by 1, contributing a factor of $xy$. The number of upper empty lines and inversions remain constant.  This subcase contributes: 
   \begin{align*}
     xy\sum_{k=1}^{n-1} x^{k}p^{n-k} \sum_{e\in I_{n}^{(k)}}y^{\tel(e)}z^{\uel(e)}p^{\ssum(e)}q^{\inv(e)} =   xyp^{n}F_n\Big(\frac{x}{p}\Big)-xyA(x).
   \end{align*}
  
 \begin{figure}[htbp]
    \centering
    \begin{tikzpicture}[baseline=0pt,
    scale=0.6,  square/.style={draw=black, minimum size=0.6cm, inner sep=0}]
    \foreach \y/\n in { 0/5, 1/4, 2/3, 3/2, 4/1} {
        \foreach \x in {1,...,\n} {  
            \node[square] at (6-\x, \y) {};
        }
        \node at (6, \y) {\small \y};
    }
    \foreach \pos in {  (5,0), (4,2), (2,1), (1,0), (3,0)} {
        \node  at \pos {$\bullet$};
    }
    \end{tikzpicture}\qquad %  
    \begin{tikzpicture}[baseline=-1cm]  
       \draw[-{Stealth[scale=1.2]}, thick] (0,0) -- (1.4,0); 
    \end{tikzpicture}  
    \begin{tikzpicture}[baseline=0.6cm ,
    scale=0.6,  square/.style={draw=black, minimum size=0.6cm, inner sep=0}]
    \foreach \y/\n in { 0/6, 1/5, 2/4, 3/3, 4/2,5/1} {
        \foreach \x in {1,...,\n} {  
            \node[square] at (7-\x, \y) {};
        }
        \node at (7, \y) {\small \y};
    }
    \foreach \pos in { (6,0), (5,3), (4,0), (2,0), (1,0), (3,2)} {
        \node  at \pos {$\bullet$};
    }
    %  \foreach \pos in { (6,1), (5,1), (4,1), (2,1) , (3,1)} {
    %     \node  at \pos {$\textcolor{red}{\circ}$};
    % }

    % \draw[dashed, ->](6,0.8) -- (6,0.2); \draw[dashed, ->](5,0.8) -- (5,0.2);
    %  \draw[dashed, ->](4,0.8) -- (4,0.2); \draw[dashed, ->](3,0.8) -- (3,0.2); \draw[dashed, ->] (2,0.8) -- (2,0.2);
    \end{tikzpicture}
    \caption{The mapping $e' \to e$ in Case 2a.}
    \label{Case2a}
\end{figure}

\noindent (2b) At least one zero is mapped to $1$, depicted in Fig.~\ref{Case2b}. Assume that~$i$ of the zeros stay as $0$ while the remaining $k-i$ zeros are changed to~$1$, where $0 \le i \le k-1$.
Then the total number of empty lines and the number of upper empty lines remain unchanged. The inversion contribution 
 is~$\qbinom{k}{i}$. In this case, the contribution is: 
\begin{align*}
 &   x\sum_{k=1}^{n-1}p^{n-k}\sum_{i=0}^{k} \qbinom{k}{i} x^ip^{k-i}  \sum_{e\in I_{n}^{(k)}}y^{\tel(e)}z^{\uel(e)}p^{\ssum(e)}q^{\inv(e)} \\[5pt]
 &-  \Big(xp^nF_n\Big(\frac{x}{p}\Big)-xA(x)\Big) \\[5pt]
% &=   xp^{n}\sum_{k=1}^{n}\sum_{i=0}^{k} \qbinom{k}{i} (\frac{x}{p})^i  \sum_{e\in I_{n}^{(k)}}y^{\tel(e)}z^{\uel(e)}p^{\ssum(e)} q^{\inv(e)}-xp^{n} T_q(A(x))|_{x=\frac{x}{p}} -  (xp^nF_n\bigl(\frac{x}{p}\bigr)-xA(x)) \\[5pt]
&=   xp^{n}T_q(F_n(x))\Big|_{x=\frac{x}{p}}-xp^{n} T_q(A(x))\Big|_{x=\frac{x}{p}} - \Big(xp^nF_n\Big(\frac{x}{p}\Big)-xA(x)\Big).
\end{align*}

\begin{figure}[htbp]
    \centering
    \begin{tikzpicture}[baseline=0pt,
    scale=0.6,  square/.style={draw=black, minimum size=0.6cm, inner sep=0}]
    \foreach \y/\n in { 0/5, 1/4, 2/3, 3/2, 4/1} {
        \foreach \x in {1,...,\n} {  
            \node[square] at (6-\x, \y) {};
        }
        \node at (6, \y) {\small \y};
    }
    \foreach \pos in {  (5,3), (4,0), (2,1), (1,0), (3,0)} {
        \node  at \pos {$\bullet$};
    }
    \end{tikzpicture}\qquad %  
    \begin{tikzpicture}[baseline=-1cm]  
       \draw[-{Stealth[scale=1.2]}, thick] (0,0) -- (1.4,0); 
    \end{tikzpicture}  
    \begin{tikzpicture}[baseline=0.6cm ,
    scale=0.6,  square/.style={draw=black, minimum size=0.6cm, inner sep=0}]
    \foreach \y/\n in { 0/6, 1/5, 2/4, 3/3, 4/2,5/1} {
        \foreach \x in {1,...,\n} {  
            \node[square] at (7-\x, \y) {};
        }
        \node at (7, \y) {\small \y};
    }
    \foreach \pos in { (6,4), (5,1), (4,0), (2,0), (1,0), (3,2)} {
        \node  at \pos {$\bullet$};
    }
    %  \foreach \pos in { (6,1), (5,1), (4,1), (2,1) , (3,1)} {
    %     \node  at \pos {$\textcolor{red}{\circ}$};
    % }

    % \draw[dashed, ->](6,0.8) -- (6,0.2); \draw[dashed, ->](5,0.8) -- (5,0.2);
    %  \draw[dashed, ->](4,0.8) -- (4,0.2); \draw[dashed, ->](3,0.8) -- (3,0.2); \draw[dashed, ->] (2,0.8) -- (2,0.2);
    \end{tikzpicture}
    \caption{The mapping $e' \to e$ in Case 2b.}
    \label{Case2b}
\end{figure}

Collecting all contributions from each case above leads to  the   recurrence relation for  $F_{n+1}(x)$:
\begin{align*}
    F_{n+1}(x)
    &=(z-1)z^{n-1}y^{n}x^{n+1} +  p^nx \Bigl(
	(y-1)F_n\Big(\frac{x}{p}\Big) + T_q(F_n(x))\Big|_{x=\frac{x}{p}}\Bigr).
\end{align*}
This completes the proof.

\section{Applications}

In this section, we explore several specializations of the polynomial $F_n(x; y, z, p, q)$, highlighting its role as a master polynomial that unifies diverse combinatorial structures. By setting $q=1,0,-1$, the polynomial recovers classical combinatorial frameworks: when $q=1$, it reflects the structure of the symmetric group; when $q=0$, it encodes Dyck paths; and when $q=-1$, it captures involutions.

\subsection{Special Cases for $q=1$}

When $q=1$, the $q$-exponential operator $T_q$ becomes the shift operator $ f(x) \mapsto f(x+1)$. Furthermore, setting $y=z=1$ allows the generating function $F_{n}(x;1,1,p,1)$  to be expressed as an explicit product formula. This provides a direct connection to Mahonian statistics and Stirling statistics. 

\begin{Proposition} 
For $n\geq 1$, 
\begin{equation}\label{eq:yzq=1}
F_n(x;1,1,p,1) = (p^{n-1}+ \cdots  + p^2 + p + x)\cdots (p^2 + p + x)(p + x)x.
\end{equation}
\end{Proposition}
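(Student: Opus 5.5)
Two routes are available: specialize the recurrence of Theorem~\ref{Main-F-rec}, or count directly. The direct count is cleaner, and I would use it as the proof, with the recurrence as a check.

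\emph{Direct count.} With $y=z=q=1$, the weight of $e\in I_n$ is $x^{\noz(e)}p^{\ssum(e)}$. This weight factors over the coordinates, since each $e_i$ contributes $x$ if $e_i=0$ and $p^{e_i}$ otherwise. Each $e_i$ ranges independently over $\{0,1,\dots,i\}$, so
$$
F_n(x;1,1,p,1)=\prod_{i=0}^{n-1}\bigl(x+p+p^2+\cdots+p^i\bigr).
$$
This is exactly \eqref{eq:yzq=1}.

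\emph{Check via the recurrence.} At $y=z=q=1$, the first term $(z-1)z^{n-1}y^nx^{n+1}$ and the term $(y-1)F_n(x/p)$ both vanish. Moreover $T_1$ is the shift $f(x)\mapsto f(x+1)$, so the recurrence becomes
$$
F_{n+1}(x)=p^n x\,F_n\Bigl(\frac{x}{p}+1\Bigr).
$$
Let $P_n(x)=\prod_{i=0}^{n-1}(x+p+\cdots+p^i)$. Substituting $x\mapsto x/p+1$ turns the factor $x+p+\cdots+p^i$ into $\frac{1}{p}(x+p+\cdots+p^{i+1})$. Collecting the $n$ factors of $\frac1p$ against $p^n$ gives $p^n x P_n(x/p+1)=P_{n+1}(x)$. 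Together with $F_1=x=P_1$, induction confirms the formula.

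The only subtle point is justifying that $T_q$ at $q=1$ acts as the shift. This holds because $\sum_k D^k/k!$ is Taylor's formula applied to polynomials.
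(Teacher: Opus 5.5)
Your proof is correct, and your main argument takes a different route from the paper's. The paper's proof consists only of specializing Theorem~\ref{Main-F-rec}. At $y=z=q=1$ it reduces to $F_{n+1}(x)=p^nxF_n(x/p+1)$ and iterates from $F_1=x$. That is exactly your ``check'', including the induction step that the paper leaves as ``immediately yields''.

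Your direct count bypasses Theorem~\ref{Main-F-rec} entirely. Both $\noz$ and $\ssum$ are sums of contributions from single coordinates, and the entries $e_i\in\{0,\dots,i\}$ vary independently. So the generating function factors as $\prod_{i=0}^{n-1}(x+p+\cdots+p^i)$, where the factor indexed by $i$ is the generating function of the single entry $e_i$.

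This route is more elementary and self-contained. It needs neither the case analysis behind the recurrence nor the identification of $T_q$ at $q=1$ with the shift $f(x)\mapsto f(x+1)$. It also explains why a product formula appears exactly when $y=z=q=1$: $\tel$, $\uel$ and $\inv$ are the statistics that couple different coordinates. The paper's route, in exchange, fits its aim of presenting Theorem~\ref{Main-F-rec} as a master recurrence from which all the specializations in Section 3 are derived.
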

\begin{proof}
When $q=y=z=1$, the  recurrence relation of $F_n(x)$ given in~\eqref{F-rec} simplifies to 
$$
F_{n+1}(x) =  p^nx  F_n\Big(\frac{x}{p}+1 \Big).
$$

Starting from $F_1 = x$, an iterative application of this recurrence immediately yields \eqref{eq:yzq=1}.
\end{proof}

Consequently, we have the following corollaries of the product form:
\begin{enumerate}
    \item  Setting $p=1$ gives $F_n(x;1,1,1,1)=x(x+1)\cdots(x+n-1)$, which is the generating function for the Stirling statistics on $\mathfrak{S}_n$. In particular, the coefficient of $x^k$ is the unsigned Stirling numbers of the first kind $c(n,k)$.
     \item Setting $x=1$ yields $F_n(1;1,1,p,1)=[n]_p!$. This is the generating function for the  Mahonian distribution on the symmetric group $\mathfrak{S}_n$, such as the major index $\maj(\sigma)$, defined as the sum of indices $i$ such that $\sigma_i>\sigma_{i+1}$.  
\end{enumerate}

   In the special case where $x=y=p=q=1$, the recurrence \eqref{F-rec} simplifies significantly, allowing us to determine the distribution of $z$ explicitly. 

\begin{Proposition}
 For $n\geq 1$,  we have 
    \[F_{n}(1;1,z,1,1) = \sum_{j=0}^{n-1}  M(n,j) z^j,\]
    where the coefficients are given by
    \begin{equation}\label{eq:xypq=1}
       M(n,j) = (n-j)! \; [   ({n-j}+1)^{j+1} -      ({n-j})^{j+1} ] .
    \end{equation}
\end{Proposition}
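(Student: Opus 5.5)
We argue by direct counting, which is cleaner than iterating \eqref{F-rec}. Setting $x=y=p=q=1$ in $F_n$ leaves $\sum_{e} z^{\uel(e)}$ with $\uel(e)=N-1-\max(e)$, where $N$ is the length of the sequences. So the coefficient of $z^j$ is the number of inversion sequences of length $N$ whose maximum equals $m:=N-1-j$. (As a cross-check one could instead iterate the specialized recurrence $F_{N+1}(x)=(z-1)z^{N-1}x^{N+1}+xF_N(x+1)$, obtained from \eqref{F-rec} since $T_1$ is the shift, but the counting route is shorter.)

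\textbf{Step 1 (cumulative count).} The number of $e\in I_N$ with $\max(e)\le m$ is obtained by choosing each $e_i\in\{0,\dots,\min(i,m)\}$ independently. This gives
\[
\prod_{i=0}^{N-1}(\min(i,m)+1)=m!\,(m+1)^{N-m}.
\]

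\textbf{Step 2 (exact maximum).} Subtracting the count for $\max(e)\le m-1$ gives
\[
m!\,(m+1)^{N-m}-(m-1)!\,m^{N-m+1}=m!\bigl[(m+1)^{N-m}-m^{N-m}\bigr].
\]
For $m=0$ this reads $1$, which is correct since only the zero sequence has maximum $0$.

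\textbf{Step 3 (reindexing).} Substitute $m=N-1-j$, so $N-m=j+1$. The coefficient of $z^j$ in the length-$N$ polynomial is then
\[
(N-1-j)!\bigl[(N-j)^{j+1}-(N-1-j)^{j+1}\bigr].
\]

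\textbf{Step 4 (matching the stated formula).} This is where the statement needs care, and it is the step I expect to be the main obstacle. With $N=n+1$, Step 3 becomes exactly
\[
(n-j)!\bigl[(n-j+1)^{j+1}-(n-j)^{j+1}\bigr]=M(n,j),
\]
so $M(n,j)$ as written in \eqref{eq:xypq=1} is the coefficient of $z^j$ in $F_{n+1}(1;1,z,1,1)$, not in $F_n(1;1,z,1,1)$.

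The tabulated data confirm this. From the displayed $F_3$ we get $F_3(1;1,z,1,1)=z^2+3z+2$, and the formula gives $M(2,0)=2$, $M(2,1)=3$, $M(2,2)=1$. By contrast, $F_2(1;1,z,1,1)=z+1$, whereas \eqref{eq:xypq=1} would give $M(2,0)=2$ and $M(2,1)=3$.

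Hence the statement, taken literally with $F_n$ on the left and $j$ running from $0$ to $n-1$, is off by one in $n$ and is false (already at $n=2$). What Steps 1--3 prove is the corrected identity
\[
F_{n+1}(1;1,z,1,1)=\sum_{j=0}^{n}M(n,j)\,z^j,
\]
which is the intended content with the index shift and the summation range extended to $j=n$ (where $M(n,n)=0!\,(1-0)=1$ counts the zero sequence). Equivalently, for $F_n$ itself the coefficient of $z^j$ is $M(n-1,j)$ for $0\le j\le n-1$.

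As a final check, $\sum_j M(n,j)=(n+1)!$: the sum over $m$ of the Step 2 counts telescopes, and the $m=n$ term of Step 1 equals $n!\,(n+1)=(n+1)!$.
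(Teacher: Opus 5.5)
Your argument is correct, and so is your diagnosis of the index. For $n\ge 2$ the identity as printed fails; for example $F_2(1;1,z,1,1)=1+z$. The paper's own proof iterates the recurrence and then evaluates $F_{n+1}(1)$, so what it actually establishes is exactly your corrected form $F_{n+1}(1;1,z,1,1)=\sum_{j=0}^{n}M(n,j)z^j$. The slip is in the statement, not in your reasoning.

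Your route differs from the paper's. The paper specializes \eqref{F-rec} to $F_{n+1}(x)=(z-1)z^{n-1}x^{n+1}+xF_n(x+1)$ and iterates it to obtain
\[
F_{n+1}(x)=\sum_{j=1}^{n}x(x+1)\cdots(x+j-2)(x+j-1)^{n-j+2}(z-1)z^{n-j}+x(x+1)\cdots(x+n).
\]
It then sets $x=1$ and reads off the coefficients of $z^j$. You instead count directly: first the cumulative number $m!\,(m+1)^{N-m}$ of sequences with $\max(e)\le m$, then a difference.

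The two computations are really the same one approached from opposite ends. The $j$-th product in the paper's formula equals $\prod_{i=0}^{n}\bigl(x+\min(i,j-1)\bigr)$, which is the $x^{\noz}$-generating function of sequences in $I_{n+1}$ with maximum at most $j-1$. The factors $(z-1)z^{n-j}$ then carry out exactly the summation by parts of your Step~2.

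Your version is self-contained, since it does not need Theorem~\ref{Main-F-rec}, and it explains where the formula comes from. The paper's version falls out of the master recurrence and keeps the variable $x$, so it gives the joint distribution of $\noz$ and $\uel$. Your count refines just as easily, though: replace each factor $\min(i,m)+1$ with $x+\min(i,m)$.
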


\begin{proof}
 Setting $y=p=q=1$ in \eqref{F-rec} yields 
$$
	F_{n+1}(x) = (z-1)  z^{n-1}  x^{n+1}  + x F_n(x+1).
$$
Iterating the above recurrence, we obtain
\begin{align*}
	F_{n+1}(x) 
	&= \sum_{j=1}^n  x (x+1) \cdots (x+j-2) (x+j-1)^{n-j+2} (z-1)  z^{n-j}   \\
	&~~ +  x (x+1) \cdots (x+n). 
\end{align*}
To complete the proof, we evaluate $F_{n+1}(1)$ and extract the coefficient of  $z^j$.  
\end{proof}

\noindent \textbf{Remark}. The sequence $M(n,n-j-1)$ matches OEIS A056151 \cite{OEIS}. Specifically, $M(n,n-j-1)$ counts the number of permutations $\sigma=\sigma_1\sigma_2\cdots \sigma_n\in \mathfrak{S}_n$ such that $\max(\sigma_i - i) = n-j-1$.  

Finally, we consider the total number of empty lines statistic $\tel(e)$. It is an established result that the distribution of this statistic over $I_n$ coincides with the classical descent statistic on permutations. For a permutation $\sigma = \sigma_1\sigma_2 \cdots \sigma_n \in \mathfrak{S}_n$, we define its descent set as $\text{Des}(\sigma) = \{i \in [n-1] : \sigma_i > \sigma_{i+1}\}$. The number of descents  of $\sigma$  is  $\text{des}(\sigma) = |\text{Des}(\sigma)|$.

\begin{Proposition}[\cite{Mantaci-Rakotondrajao-2001}]
For $n\geq 1$, setting $x=z=p=q=1$, we have 
    $$
    F_n(1;y,1,1,1) = \sum_{e\in I_n} y^{\tel(e)} = \sum_{\sigma\in \mathfrak{S}_n} y^{\des(\sigma)}.
    $$ 
\end{Proposition}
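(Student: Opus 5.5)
We prove the identity with the recurrence of Theorem~\ref{Main-F-rec}, specialized to $q=z=p=1$ with $x$ kept as an auxiliary variable. Set $G_n(x)=F_n(x;y,1,1,1)$. Since $T_1$ is the shift $f(x)\mapsto f(x+1)$ and $z-1=0$, \eqref{F-rec} becomes
\begin{equation*}
G_{n+1}(x)= x\bigl((y-1)G_n(x)+G_n(x+1)\bigr), \qquad G_1(x)=x .
\end{equation*}
We want $G_n(1)=A_n(y):=\sum_{\sigma\in\mathfrak S_n}y^{\des(\sigma)}$. Evaluating at $x=1$ involves $G_n(2)$, so the recurrence does not close on its own. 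We therefore track $G_n$ at all nonnegative integers.

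\textbf{Closed form for $G_n(m)$.} Write $G_n(x)=\sum_k a_{n,k}\binom{x+k-1}{n}$, or better, guess directly. We claim
\begin{equation*}
G_n(m)=\sum_{j\ge 0} y^{j}(1-y)^{n-1-j}\,\cdots
\end{equation*}
This is too vague, so we take the cleaner route via Worpitzky-type identities. The recurrence says that multiplication by $x$ is applied after the operator $f\mapsto (y-1)f+Ef$, where $E$ is the shift. We prove by induction on $n$ the generating-function identity
\begin{equation*}
\sum_{m\ge 0} G_n(m)\,t^m=\frac{P_n(t)}{(1-t)^{n+1}}
\end{equation*}
for a suitable polynomial $P_n$. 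We then show that $(1-y)^{n}\sum_m G_n(m)\,(\ldots)$ recovers $A_n(y)$. Concretely, we verify the classical identity
\begin{equation*}
\frac{A_n(y)}{(1-y)^{n+1}}=\sum_{m\ge0}(m+1)^n y^m
\end{equation*}
and its refinement, and we check that $\sum_{m\ge0} y^m\cdot(\text{appropriate }G)$ satisfies the same recurrence.

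\textbf{Alternative (preferred) bijective route.} It is simpler to argue combinatorially, matching the construction of Section~2 with insertion on permutations. Given $e'\in I_n$, the map $e'\mapsto e$ changes $\tel$ by $+1$ exactly when all zeros stay zero, and leaves $\tel$ unchanged otherwise. Specializing the recurrence to $x=1$ alone loses information, so we keep $x$ to record $\noz$. We then want a statistic on permutations jointly equidistributed with $(\noz,\tel)$, for instance (number of left-to-right minima, $\des$) for a suitable encoding, satisfying the same recurrence. Concretely, we define a recursive construction $\mathfrak S_n\to\mathfrak S_{n+1}$ that mirrors the one on $I_n$. Prepend a new smallest element and let each left-to-right minimum independently choose its block, in such a way that des increases exactly in the ``all stay'' option. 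We then verify $\sum_\sigma x^{\mathrm{lrmin}(\sigma)}y^{\des(\sigma)}$ obeys the displayed recurrence for $G_n$.

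\textbf{Main obstacle.} The main obstacle is finding the permutation statistic pair that satisfies the shift recurrence. Here we lean on Mantaci--Rakotondrajao's bijection as cited, in which the number of distinct entries of $e$ corresponds to $n-\des$ of the image permutation. If this cannot be made to track $x$, the fallback is purely analytic. From the recurrence we show $G_n(m)=\sum_{\sigma}\binom{\,\cdot\,}{n}$-type Worpitzky expansions, namely
\begin{equation*}
G_n(x)=\sum_{k}A_{n,k}(y)\text{-weighted }\binom{x+n-1-k}{n}\text{ terms},
\end{equation*}
and then set $x=1$, where all terms but the Eulerian ones vanish.
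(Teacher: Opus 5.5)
Your proposal does not yet contain a proof. It sketches three routes and finishes none of them.

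The generating-function route is empty as written, because $\sum_{m\ge 0}G_n(m)t^m=P_n(t)/(1-t)^{n+1}$ holds for every polynomial of degree at most $n$ in $m$. All the content lies in identifying $P_n$ with Eulerian data, and that step is left as ``$(\ldots)$''.

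In the bijective route you guess the right companion statistic: for instance, $G_3=x^3y^2+3x^2y+xy+x$ is exactly $\sum_{\sigma\in\mathfrak S_3}x^{\mathrm{lrmin}(\sigma)}y^{\des(\sigma)}$. But the map $\mathfrak S_n\to\mathfrak S_{n+1}$ is never defined (``prepend a new smallest element and let each left-to-right minimum choose its block'' is not an actual operation). The recurrence for $\sum_\sigma x^{\mathrm{lrmin}(\sigma)}y^{\des(\sigma)}$ is also never verified, and the Worpitzky fallback is only a placeholder.

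For comparison, the paper gives no argument beyond citing Mantaci--Rakotondrajao. Since $\tel=n-\dist$, the cited Eulerian equidistribution of $\dist$ settles the statement at $x=1$ immediately. So whether that bijection ``can be made to track $x$'' is irrelevant here; tracking $x$ only became necessary because you chose a recurrence in $x$.

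If you want a self-contained proof along your own lines, the missing idea is to replace the shift recurrence by one that closes at $x=1$. Write $Ef(x)=f(x+1)$ and $\Delta=E-1$, so that $G_{n+1}=x(y+\Delta)G_n$, and prove by induction that
\[
x\Delta G_n=\bigl(1+(n-1)y\bigr)G_n+y(1-y)\,\partial_y G_n .
\]
For $n=1$ both sides equal $x$. For the inductive step:
\begin{itemize}
\item The rules $\Delta\circ x=x\Delta+E$ and $\partial_y\circ y=y\partial_y+1$ give $x\Delta G_{n+1}=x(y+\Delta)\bigl[x\Delta G_n\bigr]+xy\,EG_n$.
\item Insert the induction hypothesis and use $x(y+\Delta)\partial_y=\partial_y\, x(y+\Delta)-x$.
\item The leftover terms combine to $xy(E-1+y)G_n=yG_{n+1}$, which is exactly what is needed.
\end{itemize}
Hence $G_{n+1}=\bigl(1+xy+(n-1)y\bigr)G_n+y(1-y)\partial_yG_n$. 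At $x=1$ this is the classical Eulerian recurrence $A_{n+1}=(1+ny)A_n+y(1-y)A_n'$, $A_1=1$, obtained by inserting $n+1$ into a permutation of $[n]$.

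Keeping track of whether $n+1$ is inserted in front shows that $\sum_\sigma x^{\mathrm{lrmin}(\sigma)}y^{\des(\sigma)}$ satisfies the same $x$-refined recurrence. So this argument would also establish your conjectured joint equidistribution of $(\noz,\tel)$ with $(\mathrm{lrmin},\des)$, a refinement of the proposition.
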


\subsection{Special Cases for $q=0$}

When $q=0$, inversion sequences reduce to weakly increasing sequences, that is,  for any $e\in I_n$, we have $$e_0\leq e_1\leq e_2\leq \cdots \leq e_{n-1}.$$
Let $WI_n$ denote the set of    weakly increasing sequences of length $n$. 

\begin{Proposition}[\cite{Martinez-Savage-2018}]
    The number of weakly increasing inversion sequences of length $n$ is the $n$-th Catalan number 
    $$C_n=\frac{1}{n+1}\binom{2n}{n}.$$
\end{Proposition}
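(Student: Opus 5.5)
Since the sequences here satisfy $0\le e_i\le i$ and are weakly increasing, I will biject $WI_n$ with Dyck paths of semilength $n$. The map: read $e$ from left to right, and for each $i$ place $e_i-e_{i-1}$ up-steps followed by one down-step, with the convention $e_{-1}=0$. Equivalently, the lattice path from $(0,0)$ to $(n,n)$ takes east steps at heights $e_0,e_1,\dots,e_{n-1}$ and then finishes with north steps up to height $n$. In this form the path has $n$ east steps, and the $(i+1)$-st east step occurs at height $e_i$.

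The constraint $e_i\le i$ says that when the $(i+1)$-st east step is taken, the height is at most $i$, the number of east steps already taken. This is exactly the condition that the path never goes strictly above the diagonal $y=x$. Monotonicity of $e$ is what makes the path well defined, since heights cannot decrease. Conversely, any path staying weakly below the diagonal gives back a weakly increasing sequence with $e_i\le i$. So $|WI_n|$ equals the number of lattice paths from $(0,0)$ to $(n,n)$ weakly below the diagonal, which is $C_n=\frac{1}{n+1}\binom{2n}{n}$ by the standard reflection argument.

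To keep things self-contained, I will include a short count. One route uses André's reflection: the bad paths touching $y=x+1$ correspond bijectively to paths ending at $(n-1,n+1)$, so the count is $\binom{2n}{n}-\binom{2n}{n-1}$. Alternatively, I can derive the recurrence directly on sequences. Decompose $e$ by the last index $j\ge1$ with $e_j=0$, or equivalently by the first return of the path. This gives $|WI_{n+1}|=\sum_{k}|WI_k||WI_{n-k}|$, which is the Catalan recurrence with $|WI_0|=|WI_1|=1$.

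The only delicate point is getting the off-by-one right in the bijection: the diagonal condition must be weak ($e_i\le i$, allowing touches), and the endpoint must be placed correctly. I will check the bijection on $n=3$, where the five sequences $000,001,002,011,012$ should match the five Dyck paths, and this agrees with $f_3(0)=5$ in the table. As a further consistency check, setting $q=0$ forces $\inv(e)=0$, so $F_n(1;1,1,1,0)=|WI_n|$, consistent with the earlier remark.
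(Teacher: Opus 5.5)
Your main route is correct and is essentially the paper's proof. The lattice-path description (east steps at heights $e_0,\dots,e_{n-1}$, then $N^{n-e_{n-1}}$) is exactly the paper's bijection $\phi$ onto $\mathcal{D}_n$, and $e_i\le i$ is the diagonal condition at the point $(i,e_i)$ where the $(i+1)$-st east step starts. The paper simply quotes $|\mathcal{D}_n|=C_n$ as well known; your reflection count $\binom{2n}{n}-\binom{2n}{n-1}$ correctly supplies it.

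Your first, ``up-steps then a down-step'' description is garbled, though. Since $e_0=0$, it starts with a down-step, and it has only $e_{n-1}$ up-steps against $n$ down-steps. Keep the lattice-path version, and use $E\mapsto(1,1)$, $N\mapsto(1,-1)$ if you want Dyck paths.

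The alternative recurrence is wrong as stated, because cutting at the last zero is not the first-return decomposition. Under $\phi$ the zeros of $e$ are the east steps on $y=0$, so the last zero marks the first peak, not the first return. The prefix up to it is just the all-zero block, and the remaining entries only satisfy $1\le e_i\le i$. After shifting, that is $f_t\le t+j$, which is not a copy of any $WI_m$, so no product $|WI_k|\,|WI_{n-k}|$ arises. Concretely, in $WI_4$ the classes by position of the last zero have sizes $5,5,3,1$ (the numbers $T(4,k)$ of Proposition~\ref{prop:yzp1q=0}). The convolution terms are $C_0C_3,C_1C_2,C_2C_1,C_3C_0=5,2,2,5$.

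The first return corresponds instead to the first index $m\ge1$ with $e_m=m$ (take $m=n+1$ if there is none, for $e\in WI_{n+1}$). For $1\le i<m$ you have $e_i\le i-1$, so $(e_1,\dots,e_{m-1})\in WI_{m-1}$. Also $(e_m-m,\dots,e_n-m)\in WI_{n+1-m}$, and this gives $|WI_{n+1}|=\sum_{m=1}^{n+1}|WI_{m-1}|\,|WI_{n+1-m}|$.
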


\begin{proof}
Let $\mathcal{D}_n$ denote the set of lattice paths from $(0,0)$ to $(n,n)$  consisting of $n$ east steps $E=(1,0)$ and $n$ north steps $N=(0,1)$ that never go above the diagonal $y=x$. It is well known that  $|\mathcal{D}_n|=C_n$. We establish a  bijection $\phi$ between the set $WI_n$  and the set $\mathcal{D}_n$.
 
Given $e=(e_0,e_1,\dots,e_{n-1}) \in WI_n$, define a lattice path $\phi(e)\in \mathcal{D}_n$ by 
 $$
\phi(e)=EN^{e_1-e_0}EN^{e_2-e_1} \cdots EN^{e_{n-1}-e_{n-2}}EN^{n-e_{n-1}},
$$
where $N^k$ denotes $k$ consecutive north steps. 
The total number of north steps is
\[(e_1-e_0)+(e_2-e_1)+\cdots+(e_{n-1}-e_{n-2})+(n-e_{n-1})=n.\]
So $\phi(e)$ is a path from $(0,0)$ to $(n,n)$. 

For example, taking $n=4$ and $e=(0,1,1,2)$, the resulting path is $ENEENEN^2$, as illustrated in Fig.~\ref{exam-0112}. 
 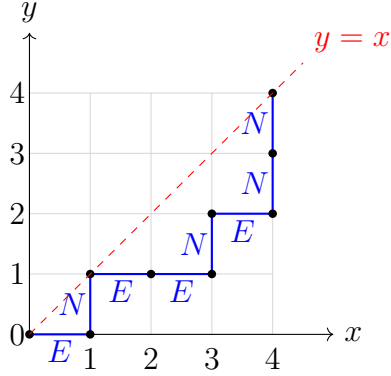
\begin{figure}[htbp]
 \centering
\begin{tikzpicture}[scale=0.8]
    \draw[gray!30, step=1] (0,0) grid (4,4);
    \draw[->] (0,0) -- (5,0) node[right] {$x$};
    \draw[->] (0,0) -- (0,5) node[above] {$y$};
    \foreach \x in {1,2,3,4} \draw (\x, -0.4) node {$\x$};
    \foreach \y in {0,1,2,3,4} \draw (-0.2, \y) node {$\y$};
    \draw[thick, blue] (0,0) -- (1,0) node[midway, below, yshift=2pt] {$E$}
    -- (1,1) node[midway, left, xshift=3pt] {$N$} -- (2,1) node[midway, below, yshift=2pt] {$E$}
    -- (3,1) node[midway, below, yshift=2pt] {$E$}
    -- (3,2) node[midway, left, xshift=3pt] {$N$}
    -- (4,2) node[midway, below, yshift=2pt] {$E$}
    -- (4,3) node[midway, left, xshift=3pt] {$N$}
    -- (4,4) node[midway, left, xshift=3pt] {$N$};
    \fill (0,0) circle (2pt);
    \fill (4,4) circle (2pt);
    \fill (1,0) circle (2pt) ;
    \fill (1,1) circle (2pt) ;
    \fill (2,1) circle (2pt) ;
    \fill (3,1) circle (2pt);
    \fill (3,2) circle (2pt) ;
    \fill (4,2) circle (2pt) ;
    \fill (4,3) circle (2pt) ;
    \draw[dashed, red] (0,0) -- (4.5,4.5) node[above right] {$y=x$};
\end{tikzpicture}
\caption{The lattice path $\phi(e)\in \mathcal{D}_4$.}
\label{exam-0112}
\end{figure}

To verify that $\phi(e)$ never rises above $y = x$, we  observe that after $e_i-e_{i-1}$ north steps, the path reaches $(i,e_i)$.  Since  $e_{i} \leq i$ for each $i$,  the path stays  below the diagonal. Hence, $\phi(e)\in\mathcal{D}_n$.

Conversely, given a path $P \in \mathcal{D}_n$, define  $e = (e_0, e_1, \dots, e_{n-1})$ by letting $e_i$ be the $y$-coordinate of $P$ immediately before the $(i+1)$-st east step. Then $e_{i+1} - e_i$ record the number of consecutive north steps between two east steps,  obviously, $e_{i+1}-e_i\geq 0$.  So $e$ is weakly increasing.  Moreover,  the condition that $P$ never goes above $y = x$ implies $e_i \leq  i$ for each $i$. So $e\in WI_n$ is  a weakly increasing inversion sequence. 

It is straightforward to verify that the two constructions are inverse to each other, completing the proof.
\end{proof}

 While the specialization $q = 0$ identifies the cardinality of $WI_n$ with the $n$-th Catalan number, the refined polynomials reveal deeper structural symmetries and combinatorial properties, particularly in terms of Dyck paths. Recall that a Dyck path of semilength $n$ consists of up steps $(1,1)$ and down steps $(1,-1)$ from $(0,0)$ to $(2n,0)$, never going below the $y$-axis. In this context, we adopt the following definitions: a {\it  peak} is an up step immediately followed by a down step, a {\it valley} is a down step immediately followed by an up step, and a {\it return} is a down step ending at ground level ($y = 0$); see \cite{Deutsch-1999, Stanley-2015}.

\begin{Proposition}
Let $p=y=1$ and $f_n(x,z)=zF_n(x;1,z,1,0)$. Then  for $n\geq 1$,  
\begin{enumerate}
    \item $f_n(x,z)$ is symmetric in  $x$ and $z$, that is, 
$ f_n(x,z) = f_n(z,x)$.
\item When $x=z$, we have 
$$f_{n}(x,x) = \sum_{k=2}^{2n} H(n,k) x^{k},$$
where $H(n,k)$ is the number of  Dyck paths of semilength $n$  such that the sum of  the heights of the first peak and the last peak is $k$ (OEIS A114503). 
\end{enumerate}
\end{Proposition}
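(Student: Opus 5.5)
The plan is to bypass the recurrence \eqref{F-rec} and argue bijectively through the map $\phi$ from the proof of the Catalan proposition. With $q=0$ and $y=p=1$, only weakly increasing sequences survive, so
$$
F_n(x;1,z,1,0)=\sum_{e\in WI_n} x^{\noz(e)}z^{\uel(e)},
\qquad
f_n(x,z)=\sum_{e\in WI_n} x^{\noz(e)}z^{\uel(e)+1}.
$$
Since $\uel(e)+1=n-\max(e)=n-e_{n-1}$ for weakly increasing $e$, the first step is to read both exponents off the lattice path $\phi(e)$.

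\textbf{Reading the exponents from $\phi(e)$.} Because $e_0=0$ and $e$ is weakly increasing, the zeros of $e$ form an initial block $e_0=\cdots=e_{j-1}=0$ with $j=\noz(e)$, and either $e_j>0$ or $j=n$. In
$$
\phi(e)=EN^{e_1-e_0}\cdots EN^{n-e_{n-1}}
$$
the path therefore begins with exactly $j$ east steps followed by a north step. When $j=n$ this is still correct, since the final block $N^{n}$ is nonempty. The path ends with exactly $n-e_{n-1}\ge 1$ north steps, because the last east step is preceded by $N^{e_{n-1}-e_{n-2}}$. Hence
$$
f_n(x,z)=\sum_{P\in\mathcal D_n} x^{a(P)}z^{b(P)},
$$
where $a(P)$ is the length of the initial run of $E$ steps and $b(P)$ is the length of the final run of $N$ steps.

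\textbf{Part (1).} Consider the involution $\rho$ on $\mathcal D_n$ that reverses the word of $P$ and interchanges $E$ and $N$. Geometrically this is the rotation by $180^\circ$ about $(n/2,n/2)$ composed with reflection in $y=x$, i.e.\ the reflection in the anti-diagonal $x+y=n$. This reflection preserves the region $y\le x$, so $\rho$ maps $\mathcal D_n$ to itself. It sends the final $N$-run of $P$ to the initial $E$-run of $\rho(P)$ and vice versa, so $a(\rho P)=b(P)$ and $b(\rho P)=a(P)$. This gives $f_n(x,z)=f_n(z,x)$.

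\textbf{Part (2).} Translate each path into a Dyck path of semilength $n$ by the substitutions $E\mapsto$ up step and $N\mapsto$ down step. The condition $y\le x$ becomes the condition that the Dyck path never goes below ground level. Under this translation, the initial run of $a(P)$ up steps ends at the first peak, so its height is $a(P)$. Likewise, the final run of $b(P)$ down steps begins at the last peak, so its height is $b(P)$. Setting $x=z$ then gives
$$
f_n(x,x)=\sum_{\text{Dyck } D} x^{h_{\mathrm{first}}(D)+h_{\mathrm{last}}(D)}=\sum_k H(n,k)x^k .
$$
Both heights lie in $[1,n]$, so $k$ ranges over $2\le k\le 2n$.

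\textbf{Main obstacle.} The only delicate point is the boundary bookkeeping: confirming that the all-zero sequence and the cases $e_{n-1}=n-1$ are handled correctly, and checking that $\rho$ preserves the region $y\le x$. Both are short checks.
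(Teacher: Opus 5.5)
Your proposal is correct and follows the paper's proof essentially step for step. Both arguments pass through the bijection $\phi$ and identify $\noz(e)$ with the initial run of east steps (the east steps on $y=0$) and $\uel(e)+1=n-e_{n-1}$ with the final run of north steps. Both then get the symmetry from the reverse-and-swap involution on $\mathcal{D}_n$ and read off the first and last peak heights after substituting $E\mapsto$ up and $N\mapsto$ down. Your additional checks fill in details the paper leaves implicit: that the involution is the reflection in $x+y=n$ and so preserves $y\le x$, and the boundary cases $\noz(e)=n$ and $e_{n-1}=n-1$.
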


\begin{proof}
	Let  $e=(e_0,e_1,\ldots,e_{n-1})\in WI_n$. Then \( e_{n-1} \) is the maximum entry of \( e \). As established earlier, there exists a bijection \( \phi \) between the set $WI_n$ and the set of lattice paths \( \mathcal{D}_n \). Under the bijection $\phi$ between $WI_n$ and $\mathcal{D}_n$, the following correspondences hold: 

\noindent (a). The number of zeros in  \( e \) corresponds to the number of east steps on the line \( y = 0 \);
 
 \noindent (b). The statistic $\uel(e)+1$  equals the number of final  consecutive north steps, namely \( n - e_{n-1} \).
 
 Here the shift by $1$ arises naturally from the path interpretation, and ensures that this statistic ranges over $[1,n]$, matching the range of the number of zeros.
    
   To prove the symmetry \( f_n(x,z) = f_n(z,x) \), consider the  involution on $\mathcal{D}_n$ obtained by reversing the path and exchanging $E$ and $N$. This operation preserves the set $\mathcal{D}_n$ and interchanges the two statistics above. Consequently, the generating function $f_n(x,z)$ is symmetric in $x$ and~$z$.
   For example, Fig.~\ref{exam-0013} is the involution of the lattice path illustrated in Fig.~\ref{exam-0112}.
    \begin{figure}[htbp]
 \centering
\begin{tikzpicture}[scale=0.75]
    \draw[gray!30, step=1] (0,0) grid (4,4);
    \draw[->] (0,0) -- (5,0) node[right] {$x$};
    \draw[->] (0,0) -- (0,5) node[above] {$y$};
    \foreach \x in {1,2,3,4} \draw (\x, -0.4) node {$\x$};
    \foreach \y in {0,1,2,3,4} \draw (-0.2, \y) node {$\y$};
    \draw[thick, blue] (0,0) -- (1,0) node[midway, below, yshift=2pt] {$E$}
    -- (2,0) node[midway, below, yshift=2pt] {$E$} -- (2,1) node[midway, left, xshift=3pt] {$N$}
    -- (3,1) node[midway, below, yshift=2pt] {$E$}
    -- (3,2) node[midway, left, xshift=3pt] {$N$}
    -- (3,3) node[midway, left, xshift=3pt] {$N$}
    -- (4,3) node[midway, below, yshift=2pt] {$E$}
    -- (4,4) node[midway, left, xshift=3pt] {$N$};
    \fill (0,0) circle (2pt);
    \fill (4,4) circle (2pt);
    \fill (1,0) circle (2pt) ;
    \fill (2,0) circle (2pt) ;
    \fill (2,1) circle (2pt) ;
    \fill (3,1) circle (2pt);
    \fill (3,2) circle (2pt) ;
    \fill (3,3) circle (2pt) ;
    \fill (4,3) circle (2pt) ;
    \draw[dashed, red] (0,0) -- (4.5,4.5) node[above right] {$y=x$};
\end{tikzpicture}
\caption{An involution for lattice path $ENEENENN$.}
\label{exam-0013}
\end{figure}
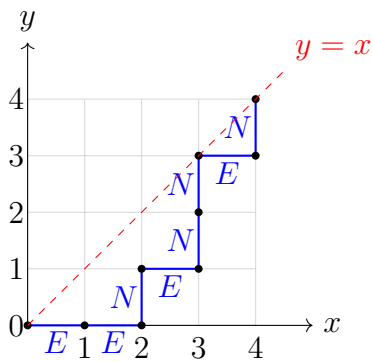

For the second statement, map each path in $\mathcal{D}_n$ to a Dyck path of semilength $n$ by replacing $E=(1,0)$ with $(1,1)$ and $N=(0,1)$ with $(1,-1)$. Under this transformation:
\begin{itemize}
\item $x$ records the height of the first peak;
\item $z$ records the height of the last peak.
\end{itemize}
Setting \( z=x \),  the exponent records the sum of the heights of the first and the last peak.  Therefore,
$$xF_n(x;1,x,1,0) = \sum_{k=2}^{2n} H(n,k) x^{k},$$
as claimed.
\end{proof}

The next result describes the distribution of valleys via the statistic $\tel(e)$. 
\begin{Proposition} 
 For $n\geq 1$, setting  $q=0$ and $x=z=p=1$, we have 
\[F_n(1;y,1,1,0) = \sum_{k=0}^{n-1} N(n,k) y^k, \]
where $N(n,k)=\frac{1}{n}\binom{n}{k+1}\binom{n}{k}$ counts the number of  Dyck paths of semilength $n$ with exactly $k$ valleys.  
\end{Proposition}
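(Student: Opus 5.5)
We use the bijection $\phi: WI_n \to \mathcal{D}_n$ constructed in the proof of the Catalan proposition, followed by the map $E\mapsto(1,1)$, $N\mapsto(1,-1)$ to Dyck paths of semilength $n$ used in the previous proposition. With $q=0$ only weakly increasing sequences survive, so $F_n(1;y,1,1,0)=\sum_{e\in WI_n} y^{\tel(e)}$. It therefore suffices to show that $\tel(e)$ equals the number of valleys of the associated Dyck path. The Narayana count $N(n,k)=\frac1n\binom{n}{k+1}\binom{n}{k}$ for Dyck paths with $k$ valleys is then a classical result (equivalently, the count with $k+1$ peaks), which we cite rather than reprove.

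The key step is to compute $\tel(e)$ in terms of the path. For $e\in WI_n$ the entries satisfy $0=e_0\le e_1\le\cdots\le e_{n-1}$, so $\dist(e)=1+|\{i\in[1,n-1]: e_i>e_{i-1}\}|$. In
$$\phi(e)=E N^{e_1-e_0} E \cdots E N^{e_{n-1}-e_{n-2}} E N^{n-e_{n-1}},$$
each strict ascent $e_i>e_{i-1}$ is a nonempty block $N^{e_i-e_{i-1}}$ followed by an $E$. After translating to a Dyck path, this is a down step immediately followed by an up step, that is, a valley. The final block $N^{n-e_{n-1}}$ is nonempty, since $e_{n-1}\le n-1$, but it is not followed by $E$, so it gives no valley. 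Conversely, every valley, meaning an occurrence of $NE$, arises from one of the interior nonempty blocks. Hence the number of valleys equals $\dist(e)-1$.

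This gives $\tel(e)=n-\dist(e)$, whereas the number of valleys is $\dist(e)-1$, so the two are complementary rather than equal. We resolve this with the symmetry of the Narayana numbers, $N(n,k)=N(n,n-1-k)$, which is immediate from the formula. Then
$$\sum_{e\in WI_n} y^{n-\dist(e)}=\sum_{j} \#\{\text{valleys}=j\}\,y^{n-1-j}=\sum_j N(n,j)y^{n-1-j}=\sum_k N(n,k)y^k.$$
Alternatively, we can view the same identity directly: $n-\dist(e)$ counts the indices $i\ge1$ with $e_i=e_{i-1}$, which are the occurrences of $EE$, i.e.\ double rises. The number of double rises of a Dyck path equals $n$ minus its number of peaks, which is again Narayana distributed.

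The main obstacle is keeping the indexing consistent: "$k$ valleys" versus "$k+1$ peaks", and the off-by-one between $\dist(e)$ and the number of ascent blocks. We will check the bookkeeping against $F_3(1;y,1,1,0)$ computed from the listed $F_3$: setting $q=0$ and $x=z=p=1$ gives $y^2+3y+1$, which matches $N(3,k)=1,3,1$.
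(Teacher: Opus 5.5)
Your proposal is correct and follows essentially the same route as the paper: pass to Dyck paths via $\phi$, identify $\dist(e)$ with the number of peaks ($EN$ corners), or equivalently, as you do, $\dist(e)-1$ with the number of valleys ($NE$ corners), and then use the symmetry $N(n,k)=N(n,n-1-k)$ to convert the statistic $n-\dist(e)$ into the valley count. You also handle the always-nonempty final block $N^{n-e_{n-1}}$ explicitly and check the result against $F_3$, which makes the off-by-one bookkeeping more transparent than in the paper's version.
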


\begin{proof}

Let $k = {\rm dist}(e)$. Under the bijection between $WI_n$ and $\mathcal{D}_n$, the statistic ${\rm dist}(e)$ corresponds to the number of corners formed by the step sequence $EN$. Equivalently, it counts the number of peaks in the corresponding Dyck path. It is well known that, for Dyck paths,  the number of valleys is equal to the number of peaks minus 1. 

In the specialization $F_{n}(1;y,1,1,0)$, the variable $y$ records 
$$n-{\rm dist}(e)=n-k.$$ 
Since the Narayana numbers are symmetric, that is,
$$N(n,n-k-1)=N(n,k),$$  
we obtain the following identity
\begin{align*}
    F_{n}(1;y,1,1,0) &= \sum_{e\in WI_n} y^{\tel(e)}=\sum_{e\in WI_n} y^{n-\dist(e)}\\[5pt]
    &=\sum_{k=1}^{n} N(n,k-1) y^{n-k}=\sum_{k=0}^{n-1} N(n,k) y^{k}.
\end{align*}
\end{proof}

We now consider the statistic given by the number of zeros in $e\in WI_n$, which leads to a refinement in terms of Dyck paths of semilength $n$ with exactly $k$ returns, denoted by $T(n,k)$; see \cite{Deutsch-1999, OEIS}. These numbers satisfy
\[
T(n,k) = T(n-1,k) + T(n,k+1),
\]
with initial conditions $T(1,1)=1$ and $T(1,k)=0$ for $k\neq 1$.

 \begin{Proposition} \label{prop:yzp1q=0}
Let $q=0$ and $y=z=p=1$. For any $n\geq 1$, 
\[F_{n}(x;1,1,1,0) = \sum_{e\in WI_n} x^{\noz(e)}= \sum_{k=1}^{n} T(n,k)x^k.\]  
\end{Proposition}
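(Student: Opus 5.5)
We prove the statement by a generating-function recurrence rather than a direct bijection. We show that the coefficients of $F_n(x;1,1,1,0)$ and the return numbers satisfy the same recurrence and the same initial values.

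\textbf{Step 1: the recurrence on the inversion-sequence side.} Put $q=0$ and $y=z=p=1$ in Theorem~\ref{Main-F-rec}. The term $(z-1)z^{n-1}y^nx^{n+1}$ vanishes, and so does the term $(y-1)F_n(x/p)$. At $q=0$ we have $[k]_0=1$, hence $[k]_0!=1$ and $\qbinom{n}{k}\big|_{q=0}=1$. By the computation preceding Case~1, this gives $T_0(x^m)=\sum_{i=0}^{m}x^i$. The recurrence therefore becomes
\[
F_{n+1}(x)=x\,T_0\bigl(F_n(x)\bigr),\qquad F_1(x)=x .
\]
Write $a(n,k)$ for the coefficient of $x^k$ in $F_n(x;1,1,1,0)$. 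Comparing coefficients gives, for $j\ge 1$,
\[
a(n+1,j)=\sum_{m\ge \max(j-1,1)} a(n,m).
\]
The lower bound is $\max(j-1,1)$ because every inversion sequence contains $e_0=0$, so $a(n,0)=0$.

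\textbf{Step 2: the same recurrence for returns.} Let $T(n,k)$ be the number of Dyck paths of semilength $n$ with exactly $k$ returns. We show that $T(n+1,j)=\sum_{m\ge j-1}T(n,m)$, with $T(n,0)=0$ for $n\ge1$, by an explicit bijection.

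Take a Dyck path $P$ of semilength $n$ with $m$ returns. Decompose it into primitive factors $P=P_1P_2\cdots P_m$. Choose $j\ge1$ with $m\ge j-1$, and form
\[
U\,P_1\cdots P_{m-j+1}\,D\,P_{m-j+2}\cdots P_m .
\]
This is a Dyck path of semilength $n+1$. Its first factor $UP_1\cdots P_{m-j+1}D$ is primitive and contributes one return, and it is followed by $j-1$ further primitive factors. So the new path has exactly $j$ returns.

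Conversely, a path of semilength $n+1$ with $j$ returns has a first primitive factor of the form $UQD$. Stripping the outer $U$ and $D$ and concatenating $Q$ with the remaining $j-1$ factors recovers $P$, with $m=(\text{number of returns of }Q)+j-1$. The two maps are inverse to each other, which gives the recurrence.

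\textbf{Step 3: conclusion and the main obstacle.} Both arrays start from $a(1,1)=T(1,1)=1$ and obey the same recurrence, so $a(n,k)=T(n,k)$ by induction on $n$, which is the statement.

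The step I expect to need the most care is reconciling this recurrence with the form quoted in the text. Iterating $T(n,k)=T(n-1,k-1)+T(n,k+1)$ gives $T(n,k)=\sum_{m\ge k-1}T(n-1,m)$, which is exactly what Step~2 needs. With the shift written literally as $T(n-1,k)$, however, one gets $T(2,2)=T(1,2)+T(2,3)=0$, whereas the path $UDUD$ shows $T(2,2)=1$.

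For this reason I would define $T(n,k)$ combinatorially and rely on the bijection of Step~2, rather than on the displayed recurrence. As a sanity check, the values $(T(3,1),T(3,2),T(3,3))=(2,2,1)$ agree with $F_3(x;1,1,1,0)=x^3+2x^2+2x$, read off from the listed $F_3$.
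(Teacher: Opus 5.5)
Your proof is correct. It follows the same overall strategy as the paper: both arrays satisfy $u(n+1,k)=\sum_{m\ge k-1}u(n,m)$ with the same first row. The difference is where the two recurrences come from.

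On the sequence side, the paper does not use Theorem~\ref{Main-F-rec}. It argues directly on $WI_n$: a sequence in $WI_{n+1}$ with $k$ zeros arises uniquely from a sequence in $WI_n$ with at least $k-1$ zeros, by adding $1$ to every entry from the $k$-th position on and prepending a $0$. You instead specialize the master recurrence to $F_{n+1}(x)=x\,T_0(F_n(x))$. Underneath, this is the same bijection. At $q=0$ the only binary word surviving in $\qbinom{k}{i}$ is $0^i1^{k-i}$. So the Section~2 construction, restricted to weakly increasing sequences, is exactly ``increment every entry from the $(i+1)$-st position onward.'' Your route presents the proposition as a corollary of the master polynomial. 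The paper's route is self-contained and independent of Theorem~\ref{Main-F-rec}.

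On the Dyck-path side, the paper only cites the recurrence for $T(n,k)$, while your first-return decomposition proves it. This is a genuine improvement. As you noticed, the displayed $T(n,k)=T(n-1,k)+T(n,k+1)$ is a misprint for $T(n,k)=T(n-1,k-1)+T(n,k+1)$. The paper's own derived recurrence $t(n+1,k)=t(n,k-1)+t(n+1,k+1)$ has the correct shift, so its argument goes through once the typo is fixed.

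One small addition: include the one-line justification of the middle expression in the statement. At $q=0$ only terms with $\inv(e)=0$ survive, that is, the weakly increasing sequences, which gives $\sum_{e\in WI_n}x^{\noz(e)}$.
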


 \begin{proof}

Let $t(n,k)$ denote the number of sequences $e\in WI_n$ containing exactly  $k$ zeros. Then
\[F_{n}(x;1,1,1,0) = \sum_{k=1}^{n} t(n,k)x^{k}. \]

We derive a recurrence for \( t(n,k) \) by constructing  sequences in \( WI_{n+1} \) from those in \( WI_n \). 

Fix $k$. Any sequence $e'\in WI_{n+1}$ with exactly $k$ zeros is obtained uniquely from a sequence $e\in WI_n$ with at least $k-1$ zeros, by increasing each entry by $1$  from the $k$-th position, and then prepending a $0$. This construction preserves the weakly increasing property and yields all such sequences. Therefore,
\[
t(n+1,k)=\sum_{i=k-1}^{n} t(n,i).
\]
Equivalently, this gives 
    \[
    t(n+1,k) = t(n,k-1) + t(n+1,k+1).
    \]
  With initial conditions $t(1,1)=1$ and $t(1,k)=0$ for $k\neq 1$, this recurrence coincides with that of the  number of Dyck paths of semilength $n$ containing $k$ returns  $T(n,k)$. Hence,
\[
t(n,k)=T(n,k),
\]
which completes the proof.
\end{proof}

\begin{Corollary}
For Dyck paths of semilength $n$, the height of the first peak and the number of returns  are equidistributed. More precisely, both statistics are counted by $T(n,k)$.
\end{Corollary}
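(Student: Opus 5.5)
The plan is to combine the two results already established for the specialization $q=0$, namely the statement about $f_n(x,z)$ and Proposition~\ref{prop:yzp1q=0}. Both statistics will be transported to the set $WI_n$, where each is identified with $\noz(e)$, and Proposition~\ref{prop:yzp1q=0} then supplies the distribution.

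\textbf{Step 1: first-peak height.} Under the bijection $\phi$ composed with the substitution $E\mapsto(1,1)$, $N\mapsto(1,-1)$, the height of the first peak of the Dyck path is the number of initial consecutive east steps. For $e\in WI_n$ these initial east steps lie on the line $y=0$. Indeed, $\phi(e)=EN^{e_1-e_0}EN^{e_2-e_1}\cdots$ and $e_0=0$, so the first north step appears right after the east step indexed by the first $i$ with $e_i>0$. The number of initial east steps is therefore exactly $\noz(e)$, as recorded in correspondence (a) in the proof of the symmetry proposition. Consequently
\[
\sum_{D}x^{\text{height of first peak of }D}=\sum_{e\in WI_n}x^{\noz(e)}=F_n(x;1,1,1,0).
\]

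\textbf{Step 2: returns.} Proposition~\ref{prop:yzp1q=0} gives $F_n(x;1,1,1,0)=\sum_{k=1}^n T(n,k)x^k$, where $T(n,k)$ counts Dyck paths of semilength $n$ with exactly $k$ returns. Comparing the coefficients of $x^k$ in the two expressions shows that both statistics have distribution $T(n,k)$, which proves the corollary.

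\textbf{Main obstacle.} The only point needing care is Step 1, the identification of the first-peak height with $\noz(e)$. It must be checked that the zeros of a weakly increasing sequence form an initial block, so that zero entries correspond exactly to the leading run of $E$ steps. This follows from monotonicity. Everything else is the direct combination of earlier results.
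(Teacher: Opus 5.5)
Your proposal is correct and follows essentially the same route as the paper: you identify the first-peak height with $\noz(e)$ on $WI_n$ via $\phi$ and the substitution $E\mapsto(1,1)$, $N\mapsto(1,-1)$, and then invoke Proposition~\ref{prop:yzp1q=0} to read off the distribution $T(n,k)$. Your explicit check that the zeros of a weakly increasing sequence form an initial block, matching the leading run of $E$ steps, supplies the detail the paper leaves to the phrase ``natural correspondence.''
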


\begin{proof}
By Proposition~\ref{prop:yzp1q=0}, the statistic $\noz(e)$ is distributed according to $T(n,k)$, 
which counts Dyck paths with $k$ returns. 

Under the natural correspondence between $WI_n$ and Dyck paths with semilength $n$, the statistic $\noz(e)$ corresponds to the height of the first peak. The result follows.
\end{proof}

\noindent \textbf{Remark}. This equidistribution also follows from the bijection constructed in 
\cite{Schmitt-Waterman-1994}, which implicitly relates the height of the first peak to the number of returns.

\subsection{Special Cases for $q=-1$}

Finally, we focus on the case $q=-1$. In this setting, the term $q^{\inv(e)}$ induces a sign-reversing effect. By constructing a sign-reversing involution on $I_n$, we show that the non-vanishing terms correspond precisely to the number of  self-inverse permutations in $\mathfrak{S}_n$, that is, permutations $\pi$ satisfying $\pi=\pi^{-1}$, or equivalently, permutations whose cycle decomposition consists only of cycles of length 1 and 2.

\begin{Proposition}
$F_n(1; 1, 1, 1, -1)$ equals the number of self-inverse permutations in $\mathfrak{S}_n$. 
\end{Proposition}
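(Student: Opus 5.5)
The plan is to work with the one-variable specialization $G_n(x)=F_n(x;1,1,1,-1)$. Setting $y=z=p=1$ and $q=-1$ in Theorem~\ref{Main-F-rec} gives
$$
G_{n+1}(x)=x\,T_{-1}(G_n(x)),\qquad T_{-1}(x^m)=\sum_{k=0}^m \left[\genfrac{}{}{0pt}{}{m}{k}\right]_{-1}x^k .
$$
The first ingredient is the classical evaluation of Gaussian binomials at $q=-1$:
$$
\left[\genfrac{}{}{0pt}{}{m}{k}\right]_{-1}=
\begin{cases}
0, & m \text{ even},\ k \text{ odd},\\[3pt]
\binom{\lfloor m/2\rfloor}{\lfloor k/2\rfloor}, & \text{otherwise}.
\end{cases}
$$
I would prove this with the sign-reversing involution on binary words of length $m$ that swaps the letters in positions $(1,2),(3,4),\dots$ at the first pair whose two letters differ. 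Such a swap changes $\inv$ by exactly $\pm1$. The surviving words are those in which every pair is constant (either $00$ or $11$), plus a free last letter when $m$ is odd, and every survivor has an even number of inversions.

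Next I would lift this involution to the whole construction of Section~2. An element of $I_{n+1}$ is encoded by its ancestor chain $e^{(1)}\to e^{(2)}\to\cdots\to e^{(n+1)}$, with one binary word chosen at each step on the zero positions of the current sequence. The inversion number is additive along the chain: nonzero entries are shifted rigidly, the zeros are split into $0$'s and $1$'s, and the inversions created at a step are exactly the inversions of the binary word chosen there. So I would apply the pair-swap involution at the \emph{last} step whose binary word is not pair-constant.

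The point to verify is that this is well defined. Swapping letters inside a word at step $t$ permutes which of the zero positions become $1$, but it does not change the number of zeros created at step $t$. By the construction, the later steps act only on positions that are still zero, and these positions are listed in left-to-right order. So later words can be transported along verbatim, and the later words stay pair-constant. Granting this, the signed count $G_{n+1}(1)$ equals the number of chains in which every step's word is pair-constant.

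It remains to count these surviving chains, which should give $1,2,4,10,26,\dots$. Iterating the recurrence by hand gives $G_3=2x+x^2+x^3$, $G_4=4x+3x^2+2x^3+x^4$ and $G_5(1)=26$, and the coefficient of $x$ in $G_{n+1}$ always equals $G_n(1)$. From this I would aim to prove $G_{n+1}(1)=G_n(1)+n\,G_{n-1}(1)$, the involution recurrence $I_{n+1}=I_n+nI_{n-1}$. Combinatorially, the surviving configuration either has last word all zero (giving the $G_n(1)$ term, the new point being fixed), or it pairs the new point with one of $n$ earlier points (giving the $n\,G_{n-1}(1)$ term). Algebraically, the same statement should follow by expanding $T_{-1}(G_n)(1)$ with the Gaussian binomial formula and inducting on a refined identity for the coefficients of $G_n(x)$. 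Here the coefficient of $x^k$ would count involutions by some statistic, for instance the number of fixed points or cycles containing $1$; I would determine this statistic from the computed data.

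I expect the main obstacle to be this last identification, namely pinning down a refined statistic on involutions that matches $[x^k]G_n$ and is stable under $f\mapsto xT_{-1}f$. If the transport argument in the lifting step turns out to be delicate, I would bypass it and work purely algebraically with the refined coefficient identity.
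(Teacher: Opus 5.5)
Your reduction is sound as far as it goes, and the transport step is not delicate. A chain is just a sequence of binary words $(w_1,\dots,w_n)$ with $|w_1|=1$ and $|w_{t+1}|$ equal to one plus the number of zeros of $w_t$. The statistic $\inv$ is additive over the words, and your pair-swap preserves the number of zeros. So the ``last non-pair-constant step'' involution is well defined, and all survivors have even $\inv$.

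But this only re-expresses $G_{n+1}(1)=\sum_m c_n(m)\,2^{\lceil m/2\rceil}$ with $c_n(m)=[x^m]G_n(x)$, which is already the recurrence at $q=-1$. The whole content of the proposition is the counting step, and you leave it open. The all-zero last word does account for $G_n(1)$. However, you give no argument that $\sum_m c_n(m)\,(2^{\lceil m/2\rceil}-1)=n\,G_{n-1}(1)$. The phrase ``pairs the new point with one of $n$ earlier points'' has no construction behind it, and your survivors show no structure of that kind. For $I_4$ they are $0000,0001,0110,0111,0022,0122,0003,0113,0023,0123$, which is not the paper's fixed-point set: the paper has $0011,0100$ in place of $0001,0110$. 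Nor is a coefficientwise refinement a promising target. The polynomial $G_5(x)=10x+6x^2+7x^3+2x^4+x^5$ is not even unimodal, and it matches neither the fixed-point distribution nor the distribution of $\pi(1)$ on involutions.

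The paper avoids this by putting the involution directly on $I_n$: swap $e_{i-1},e_i$ at the first failure of a right-to-left scan. Its fixed points then decompose from the right into singletons $e_i=i$ and equal adjacent pairs $e_{i-1}=e_i$ (with $i$ choices). Hence $a(n)=a(n-1)+(n-1)a(n-2)$ can be read off from the last one or two entries.

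If you want to keep your algebraic route, the missing idea is to evaluate rather than refine. Since $T_{-1}(x^{2r})=(1+x^2)^r$ and $T_{-1}(x^{2r+1})=(1+x)(1+x^2)^r$, write $G_n(x)=E_n(x^2)+xO_n(x^2)$ and $H_n=E_n+O_n$. This gives $G_n(1)=H_n(1)$ and $H_{n+1}(u)=H_n(u+1)+u\,H_{n-1}(u+2)$, with $H_1=1$ and the convention $H_0=1$. When you unroll this from $H_N(1)$, the index plus the argument stays equal to $N+1$. So $H_N(1)$ is the sum over monomino--domino tilings of $\{0,\dots,N-1\}$ in which a domino on $\{p,p+1\}$ has weight $p+1$. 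Conditioning on the last tile then gives exactly the involution recurrence.
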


\begin{proof}
We define a sign-reversing involution $\tau: I_n \rightarrow I_n$ on the  set of inversion sequences.  The value of $F_n(1;1,1,1,-1)$ is determined by the fixed points of $\tau$, since all non-fixed points cancel in sign-reversing pairs.

The action of $\tau$ is determined by a backward scan of the sequence $e = (e_0, e_1, \ldots, e_{n-1})$ from $i=n-1$ down to $1$. Since $e_0=0$ in every inversion sequence, no operation is required at $0$. At each step, we inspect the entries according to the following recursive rules:
\begin{itemize}
    \item Current position: Let $i$ be the current index being inspected.
    \item Condition A: If $e_i= i$, then this position satisfies the fixed-point condition. We move to the previous index $i-1$ and repeat the process.

    \item Condition B: If $e_i\neq i$, we check the pair $(e_{i-1}, e_{i})$.
    \begin{itemize}
        \item[({B1})] If $e_i = e_{i-1}$, then this pair satisfies the fixed-point condition. We then return to Condition A and continue the  process from index $i-2$.
        \item[({B2})] If $e_i \neq e_{i-1}$, we define $\tau(e)$ by swapping the elements  $e_{i-1}$ and $e_{i}$. This transposition changes the inversion count by exactly one, effectively reversing the sign: 
        $$(-1)^{\inv(\tau(e))} = -(-1)^{\inv(e)}.$$   
    \end{itemize}
\end{itemize}

A sequence $e \in I_n$ is a fixed point of $\tau$ ($\tau(e) = e$) if and only if the entire sequence can be traversed from the end to the beginning by only applying {Condition A}  or {Condition (B1)}, never swapping a pair $(e_{i-1}, e_{i})$.
 
To count the number of fixed points, denoted $a(n)$, we establish a recurrence relation based on the construction rules:
\[
a(n) = a(n-1) + (n-1)a(n-2).
\]

We verify this recurrence by considering two mutually exclusive and exhaustive cases for the fixed point sequence $e = (e_0, e_1, \ldots, e_{n-1})$:
 
\noindent Case 1: $e_{n-1} = n-1$. The last element satisfies the fixed-point condition independently. The remaining prefix $(e_0, \ldots, e_{n-2})$ must be a fixed point of $\tau$ for $I_{n-1}$. There are $a(n-1)$ such sequences.

\noindent Case 2: $e_{n-1} = e_{n-2}$. The last two elements are tied as a pair. Since $e_{n-2}$ can take any value in $\{0, 1, \ldots, n-2\}$, there are $n-1$ possible choices for this pair. The remaining prefix $(e_0, \ldots, e_{n-3})$ must be a fixed point of $\tau$ for $I_{n-2}$. This yields $(n-1)a(n-2)$ sequences. 

Summing these two cases gives the recurrence relation. Since this recurrence is known to count the number of involutions in $\mathfrak{S}_n$, we conclude that $F_n(1; 1, 1, 1, -1)$ equals the number of self-inverse permutations in $\mathfrak{S}_n$. This completes the proof.
\end{proof}

For example, there are 10 inversion sequences for $n=4$ such that $\tau(e)=e$, that is, 
\[0000,0003,0011,0022,0023,0100,0123,0122,0111,0113.\]
The remaining $14$ inversion sequences satisfy that 
\[ \tau(0001) = 0010, ~\tau(0002) = 0020,~\tau(0101) = 0110,~ \tau(0102) = 0120,\]
\[ \tau(0021) = 0012, ~ \tau(0013) = 0103,~\tau(0121) = 0112.  \]

\section{Fixed frequencies of inversion sequences}
In the previous section, we explored various specializations of polynomial $F_n(x)$ by assigning specific values to its parameters. Those results provide a broad overview of how $F_n(x)$ unifies classical combinatorial sequences. 
In this section, we  focus on the fixed-frequency enumeration of inversion sequences.  For $e\in I_n$, let  $|e|_j$ denote the number of occurrences of $j$ in $e$, that is, 
$$|e|_j = \# \{i\in [0,n-1]: e_{i} = j\}.$$  
From the definition of inversion sequences, it is straightforward to verify that $|e|_j \leq n-j$. We now present a refined $q$-enumerator for sequences with a fixed frequency vector $(|e|_0, |e|_1, \dots, |e|_{n-1})$.

\begin{Theorem}\label{fix-inv}
Let $n \ge 1$. For any sequence of non-negative integers $(|e|_0, |e|_1, \dots, |e|_{n-1})$ such that $\sum_{j=0}^{n-1} |e|_j = n$,  we obtain
\begin{align}
\sum_{\substack{e\in I_n \\ |e|_j \text{ fixed for each $j$}}} q^{\inv(e)}
= \prod_{j=0}^{n-1} \qbinom{n-j - \sum_{s=j+1}^{n-1} |e|_s}{|e|_{j}}. \label{fixed-freq}
\end{align}\end{Theorem}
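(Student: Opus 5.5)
We build the sequence by placing values from largest to smallest, and let each $q$-binomial factor record the inversions created at one stage.

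Write $m_j=|e|_j$ and $R_j=\sum_{s=j+1}^{n-1} m_s$ for the number of entries exceeding $j$. Fix the frequency vector and process the values $j=n-1,n-2,\dots,0$ in turn. Suppose the positions of all entries with value $>j$ are already chosen. The $m_j$ entries equal to $j$ must go into positions $i\ge j$ that are still free. There are $n-j$ positions $\{j,\dots,n-1\}$, and every entry with value $>j$ sits at a position $\ge$ its value, hence inside $\{j,\dots,n-1\}$. So exactly $N_j:=n-j-R_j$ admissible free slots remain. The positions $0,\dots,j-1$ stay free for the smaller values, and the values $<j$ are placed later.

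\emph{Step 1: the count factorizes.} At stage $j$ we choose which $m_j$ of the $N_j$ admissible free slots receive the value $j$. The remaining $N_j-m_j$ admissible slots, together with positions $0,\dots,j-1$, are filled with smaller values later. Since $N_j$ depends only on $j$ and the fixed frequencies, and not on earlier choices, the set of valid $e$ is in bijection with sequences of independent subset choices. This gives the $q=1$ version $\prod_j\binom{N_j}{m_j}$. One consistency check is needed: at the end, every position must be filled. This holds because $\sum_j m_j=n$ and the slots are counted exactly. We also need $N_j\ge m_j$; if it fails, both sides vanish, since $\qbinom{N}{m}=0$ for $m>N$ (with the convention $N<0$ giving $0$).

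\emph{Step 2: the inversions decompose.} Every inversion is a pair $(i,k)$ with $i<k$ and $e_i>e_k$. Assign it to the stage $j=e_k$ of its smaller value. At stage $j$, the positions whose final value is $\le j$ are the $N_j$ admissible free slots together with $0,\dots,j-1$. The inversions charged to stage $j$ are the pairs where a slot receiving $j$ lies to the right of a slot receiving a value $<j$.

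The main obstacle is that this quantity seems to depend on later choices. The fix is to count, from the viewpoint of stage $j$, the pairs (an entry equal to $j$, a later-placed entry $<j$) with the $j$ on the left. Every position among $0,\dots,j-1$ is left of every admissible slot, so those pairs cannot be inversions. Thus the only relevant pairs are among the $N_j$ admissible slots: the $m_j$ slots receiving $j$ versus the $N_j-m_j$ slots receiving smaller values. Reading these $N_j$ slots left to right as a binary word (1 for value $j$, 0 otherwise), the inversions of the form ($j$ before a smaller value) are exactly the inversions of this word. Each inversion of $e$ is counted exactly once, at the stage of its larger value, so I will set up the charging with respect to the larger value $e_i=j$ rather than the smaller.

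\emph{Step 3: conclude.} At each stage the binary word ranges over all words with $m_j$ ones and $N_j-m_j$ zeros. As quoted in the proof of Theorem~\ref{Main-F-rec} from \cite{Andrews-1976}, its inversion generating function is $\qbinom{N_j}{m_j}$. By independence of the stages, multiplying over $j$ gives \eqref{fixed-freq}.
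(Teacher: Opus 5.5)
Your proof is correct and is essentially the paper's argument: place the values from $n-1$ down to $0$, choose which $|e|_j$ of the $n-j-\sum_{s>j}|e|_s$ free admissible slots receive $j$, and read the inversions created at that stage as those of a binary word, so the $q$-binomial factors multiply by independence of the stages. Your version is more careful than the paper's, since it explains why positions $0,\dots,j-1$ cannot form inversions with the $j$'s and handles the degenerate case $N_j<m_j$; however, you should delete the first ``charge to the smaller value'' description in Step~2, because the pairs it names (a $j$ to the right of a smaller value) are not inversions, whereas the larger-value charging you settle on is the right one.
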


\begin{proof}
We construct the inversion sequence $e \in I_n$ by placing the entries in descending order of value, from $j = n-1$ down to $j=0$.  

Suppose we have already placed all entries with values  $\{j+1, j+2, \dots, n-1\}$. By the definition of inversion sequences,  the number of available positions  for the value $j$ is $m_j=n-j-\sum_{s=j+1}^{n-1} |e|_{s}$. 

We then choose $|e|_j$ positions out of these $m_j$ vacancies to insert the value $j$.  To calculate the inversions contributed by these entries, we treat the $|e|_j$ positions as  ``ones'' and the remaining $m_j - |e|_j$ empty slots (which will eventually contain values strictly smaller than $j$) as ``zeros''.  The generating function for these inversions is precisely the $q$-binomial coefficient   $\qbinom{m_j}{|e|_j}$.

  Iterating this process from $j=n-1$ down to $j=0$, the independence of each placement step ensures that the total generating function is the product of these coefficients. In particular, for $j=n-1$, the number of slots is $n-(n-1)=1$, yielding the initial factor $\qbinom{1}{|e|_{n-1}}$. Thus, the left-hand side of \eqref{fixed-freq} is equal to
 $$
    \qbinom{1}{|e|_{n-1}} \qbinom{2-|e|_{n-1} }{ |e|_{n-2}}   \cdots \qbinom{n-|e|_{n-1}-\cdots-|e|_1 }{ |e|_0}.
    $$
  This completes the proof.
  \end{proof}

\subsection{Augmented sequences and $q$-Stirling numbers}

Among the many possible fixed-frequency specializations, a particularly interesting case arises when all non-zero entries are pairwise distinct.  Specifically, it provides a direct combinatorial link to the $q$-Stirling numbers of the second kind, which have been extensively studied in the literature \cite{Carlitz-1933,  Gould-1961, Leroux-1990, Milne-1982, Sagan-1991}.
 Now, we define the $q$-Stirling numbers of the second kind by the recurrence:
\[S_q(n,k) = q^{n-k}S_{q}(n-1,k-1) + [k]_{q}S_q(n-1,k)\]
with the initial values $S_q(0,0)=1$ and $S_{q}(0,k)=0$ for any $k\neq 0$. 
When \( q = 1 \), this recurrence reduces to that of the classical Stirling numbers of the second kind, confirming that \( {S}_q(n,k) \) is a $q$-analog of the Stirling numbers of the second kind.

Next, let $I_{n,k}$ denote the subset of inversion sequences in $I_n$ containing exactly $k$ zeros, such that the remaining $n-k$ non-zero entries are pairwise distinct:
$$
I_{n,k} = \{ e \in I_n : |e|_0 = k \text{ and } |\{e_i:   e_{i}>0\} | = n-k \}. 
$$
For any $e \in I_{n,k}$, let $S = \{e_i : e_i > 0\}$ be the set of its non-zero entries, and let $R = \{1, 2, \dots, n-1\} \setminus S$ denote the set of excluded values, which has cardinality $|R| = k-1$. We represent this structure by an \textit{augmented sequence} $e \cdot R$, where the elements of $R$ are appended to $e$ in increasing order, separated by a dot.
  
   For example, if $n=8$ and $k=5$, an inversion sequence $e \in I_{8,5}$ has $n-k=3$ distinct non-zero entries. If $S = \{1, 3, 4\}$, then $R = \{2, 5, 6, 7\}$ and a possible augmented sequence would be: 
   $01003400 \cdot 2567$.
   If $k=1$, the set of excluded values $R$ is empty. If $k=n$, then $R = \{1, 2, \dots, n-1\}$, meaning all non-zero values are omitted from the sequence $e$.

We define a new statistic ${\rm Inv}(e)$ on the augmented   sequence by
\[
{\rm Inv}(e) =\inv(e\cdot R) =  \inv(e) + \# \{ (e_i, r_j) : e_i > r_j,~e_i \in e,~r_j \in R \}.
\]
With this definition, we can now connect these augmented sequences to the $q$-Stirling numbers of the second kind.

\begin{Theorem}
    The $q$-Stirling numbers of the second kind  $S_q(n,k)$ satisfy that 
    \[S_q(n,k) =\sum_{e\in I_{n,k}} q^{ {\rm Inv} (e)}. \]
\end{Theorem}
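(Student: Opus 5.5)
The plan is to show that $G(n,k):=\sum_{e\in I_{n,k}} q^{{\rm Inv}(e)}$ satisfies the same recurrence and initial values as $S_q(n,k)$, namely
\[
G(n,k)=q^{n-k}G(n-1,k-1)+[k]_qG(n-1,k).
\]
To do this, I decompose $I_{n,k}$ according to the last entry $e_{n-1}$ and remove that entry. For the initial values, $I_{1,1}=\{(0)\}$ has $R=\emptyset$ and ${\rm Inv}=0$, so $G(1,1)=1$. Also $I_{n,0}=\emptyset$ for $n\ge 1$, since $e_0=0$ forces at least one zero. With the conventions $G(0,0)=1$ and $G(0,k)=0$ for $k\neq0$, the first step $n=1$ also agrees with the recurrence.

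\textbf{Case $e_{n-1}=0$.} Deleting the last entry gives $e'\in I_{n-1,k-1}$ with the same nonzero set $S$. This is a bijection onto $I_{n-1,k-1}$, because appending a $0$ to any element of $I_{n-1,k-1}$ gives an element of $I_{n,k}$. The value $n-1$ could only occur at position $n-1$, so $n-1\notin S$. Hence the excluded sets satisfy $R=R'\cup\{n-1\}$, where $R'=\{1,\dots,n-2\}\setminus S$.

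Now compare ${\rm Inv}(e)$ with ${\rm Inv}(e')$. The final $0$ forms an inversion with each of the $n-k$ nonzero entries before it. The appended value $n-1$ is placed last in $R$, and no entry of $e$ exceeds it, so it creates no new pairs. Therefore ${\rm Inv}(e)={\rm Inv}(e')+(n-k)$, and this case contributes $q^{n-k}G(n-1,k-1)$.

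\textbf{Case $e_{n-1}=v>0$.} Deleting the last entry gives $e'$ of length $n-1$ with $k$ zeros and $n-k-1$ distinct nonzero values, all in $\{1,\dots,n-2\}$. Thus $e'\in I_{n-1,k}$, with nonzero set $S'$ and $R'=\{1,\dots,n-2\}\setminus S'$ of size $k-1$. Conversely, for a given $e'$ the admissible values of $v$ are exactly the elements of $W:=R'\cup\{n-1\}$, which has $k$ elements. So $e\leftrightarrow(e',v)$ is a bijection between $I_{n,k}$ (in this case) and pairs with $e'\in I_{n-1,k}$ and $v\in W$. For $e$ we have $S=S'\cup\{v\}$ and $R=W\setminus\{v\}$.

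The main step is to check that ${\rm Inv}(e)-{\rm Inv}(e')$ equals the number of elements of $W$ smaller than $v$. Relative to $e'$, the following pairs change.
\begin{itemize}
\item The new inversions $(e_i,v)$ with $i<n-1$ and $e_i>v$ appear in $\inv(e)$.
\item If $v\in R'$, the pairs $(e_i,v)$ with $e_i>v$ no longer count in the $R$-part. These cancel the previous item exactly; if $v=n-1$, both sets are empty.
\item The new pairs $(v,r)$ with $r\in R$ and $r<v$ are added. Since $R=W\setminus\{v\}$, these are exactly the elements of $W$ below $v$.
\item No pairs $(e_i,n-1)$ arise, because no entry of $e'$ exceeds $n-1$.
\end{itemize}
As $v$ runs over $W$, this difference takes the values $0,1,\dots,k-1$, so summing over $v$ gives the factor $[k]_q$. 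This case therefore contributes $[k]_qG(n-1,k)$.

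Adding the two cases gives the recurrence, and the theorem follows by induction on $n$. The only delicate point is the cancellation bookkeeping in the second case, in particular the separate check for $v=n-1$.
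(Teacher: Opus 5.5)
Your proof is correct and follows essentially the same route as the paper: decompose $I_{n,k}$ by the last entry, with $e_{n-1}=0$ contributing $q^{n-k}G(n-1,k-1)$, and a nonzero $e_{n-1}$ drawn from the $k$-element set $R'\cup\{n-1\}$ contributing $[k]_qG(n-1,k)$. Your explicit bookkeeping adds detail the paper only asserts: the cancellation in the second case (including $v=n-1$), and the check that the new element $n-1$ of $R$ in the first case creates no pairs.
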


 \begin{proof}
To obtain an augmented sequence in $I_{n,k}$ from an element of $I_{n-1}$, we examine how to position the $n$-th entry $e_{n-1}$.  
 
 \noindent Case 1: Let  $e' \in I_{n-1, k-1}$.  
 Setting $e_{n-1}=0$ increases the total number of zeros to $k$.  It creates  inversions with all $n-k$ non-zero elements already in the sequence. This contributes a factor of $q^{n-k}$, corresponding to the term $q^{n-k}S_{q}(n-1,k-1)$ in the recurrence.  
 
 \noindent Case 2: Let $e' \in I_{n-1, k}$.  To maintain $k$ zeros, the new entry $e_{n-1}$ must be a non-zero value selected from the available set $R' \cup \{n-1\}$. There are exactly $k$ such choices. If we choose the $i$-th smallest available value, it contributes $i-1$ additional inversions in the augmented sequence. Summing over all possibilities  yields the $q$-weight  $[k]_q$, corresponding to the term $[k]_qS_{q}(n-1,k)$.
 
 The base case $n=1$ is trivial as $e=(0)$ has ${\rm Inv}(e)=0$, giving $S_q(1,1)=S_q(0,0)=1$. The proof is completed.
\end{proof}

\section{$q$-derivative operator and inversion sequences}

A primary motivation for this work arises from the study of the $q$-analog of the Comtet identity. Recall that Comtet derived an expansion formula for the iterated derivative operator $(gD)^n$; see \eqref{Ank-Comtet}. In this work, we investigate the operator $(gD_q)^n$, obtained by replacing the classical derivative $D$ with the $q$-derivative operator $D_q$. We demonstrate that the expansions generated by $(gD_q)^n$ admit a canonical and explicit combinatorial interpretation in terms of inversion statistics on inversion sequences $I_n$.

The following $q$-Leibniz formula is a useful tool for computing higher-order $q$-derivatives, see \cite[(4.2)]{Foata-Han-2012}.
\begin{Theorem}[$q$-Leibniz]\label{th:qLeibniz}
The $q$-derivative of a product of functions satisfies:
$$
D_q \prod_{1 \le i \le n} p_i(x)
= \sum_{1 \le i \le n}
 p_1(x) \cdots p_{i-1}(x) \cdot (D_q p_i(x)) \cdot
 p_{i+1}(xq) \cdots p_n(xq).
$$
\end{Theorem}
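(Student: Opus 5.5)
We prove the $q$-Leibniz formula by induction on $n$, using only the definition \eqref{q-operator} of $D_q$. For $n=1$ the sum has the single term $D_q p_1(x)$, so there is nothing to prove. The main work is the two-factor case, which is the natural $q$-analog of the product rule.

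For two functions $u,v$ we claim
\begin{equation*}
D_q\bigl(u(x)v(x)\bigr) = (D_q u(x))\, v(xq) + u(x)\,(D_q v(x)).
\end{equation*}
To see this, insert the mixed term $u(x)v(xq)$ into the numerator:
\begin{equation*}
u(x)v(x)-u(xq)v(xq) = \bigl(u(x)-u(xq)\bigr)v(xq) + u(x)\bigl(v(x)-v(xq)\bigr).
\end{equation*}
Dividing by $x-xq$ gives the identity. The asymmetry matters here: the factor to the right of the differentiated one is evaluated at $xq$, and the factor to its left is evaluated at $x$. This is exactly the pattern in the statement.

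For the inductive step, write $\prod_{i=1}^{n} p_i(x) = u(x)\,v(x)$ with $u=p_1\cdots p_{n-1}$ and $v=p_n$, and apply the two-factor rule.
\begin{itemize}
\item The term $u(x)\,D_q p_n(x)$ is the $i=n$ summand.
\item In the term $(D_q u(x))\,p_n(xq)$, expand $D_q u$ by the induction hypothesis. Each resulting summand $p_1(x)\cdots p_{i-1}(x)\,(D_q p_i(x))\,p_{i+1}(xq)\cdots p_{n-1}(xq)$ is multiplied by $p_n(xq)$, which yields the $i$-th summand for $1\le i\le n-1$.
\end{itemize}
Summing the two parts gives the stated formula.

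The only point needing care is choosing the right split. Splitting off the last factor $p_n$ (rather than the first) makes the $xq$-shift fall on the factors to the right of $D_q p_i$. The opposite split would give the equivalent mirrored form, with shifts on the left, and would not match the statement directly. No further difficulty arises, since the argument is purely algebraic and holds for any functions where $D_q$ is defined, in particular polynomials.
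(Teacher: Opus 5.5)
Your proof is correct and complete. The two-factor identity comes from inserting the mixed term $u(x)v(xq)$, and the induction then peels off $p_n$; this is the standard verification. The paper does not prove this formula; it only quotes Foata--Han, equation (4.2). So your argument supplies a self-contained justification rather than following or departing from a proof in the text.

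One correction to your closing commentary, which does not affect the proof since you never use it. The choice of split is not what puts the shifts on the right. Split off the first factor instead, $u=p_1$ and $v=p_2\cdots p_n$. The same two-factor rule gives $(D_q p_1(x))\,p_2(xq)\cdots p_n(xq) + p_1(x)\,D_q\bigl(p_2(x)\cdots p_n(x)\bigr)$. Applying the induction hypothesis to $p_2,\dots,p_n$ again yields exactly the stated formula.

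What actually decides the orientation is the mixed term you insert in the two-factor step. Using $u(xq)v(x)$ instead gives $D_q(uv)=(D_q u(x))\,v(x)+u(xq)\,D_q v(x)$. Iterating that rule, with either split, gives the mirrored form $\sum_{i} p_1(xq)\cdots p_{i-1}(xq)\,(D_q p_i(x))\,p_{i+1}(x)\cdots p_n(x)$, which is also a valid identity.
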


\subsection{The iterated $q$-derivative operator}
First, we introduce the scaled  $q$-derivative of a function $f(x)$.  Let $ f_i(x)  =(D_q^i f)(x)$, and  define the scaled version by:  
\begin{equation}
    f_i^{(j)}(x) :=f_i(xq^j)= (D_q^i f)(q^j x).
\end{equation}
From the definition of the $q$-derivative, we obtain the following scaling relation: 
 \begin{equation}\label{eq:scaling}
     D_q f_i^{(j)}(x)  = q^j f_{i+1}^{(j)}(x).
 \end{equation}
With this notation, we derive the  first few iterations of 
$ (g D_q)^nf$: 
\begin{align*}
(g D_q)^1 f &=  \tg00 \tf10,\\[5pt]
(g D_q)^2 f  &=\tg00  \tg10 \tf11 + \tg00 \tg00 \tf20,\\[5pt]
(g D_q)^3 f 
&=( \tg00 \tg10 \tg11 +  \tg00 \tg00 \tg20 ) \tf12\\[5pt]
& \quad + ((1+q)\tg00 \tg00\tg10 +  \tg00  \tg10 \tg01 ) \tf21\\[5pt]
& \quad + (\tg00 \tg00\tg00)\tf30.
\end{align*}

By extending these low-order instances to higher-order cases via the $q$-Leibniz rule, we prove that the iterated operator $(gD_q)^n f$ admits a natural combinatorial representation in terms of inversion statistics on inversion sequences, as stated  in the following theorem.
\begin{Theorem}\label{gD-inv} 
For each $n\geq 1$, we have
\begin{equation}
(gD_q)^n f =  \sum_{e \in I_n } q^{\inv(e)} \cdot \tg{0}{0}\tg{k_1}{0}  \tg{k_2}{K_1}  \cdots \tg{k_{n-1}}{K_{n-2}}  f_k^{(K_{n-1})},
\end{equation}
where $k=\noz(e)$, $k_j=|e|_{n-j}$, and $K_j = k_1+k_2+\cdots+k_{j}$.
\end{Theorem}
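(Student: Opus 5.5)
We argue by induction on $n$, reading $(gD_q)^{n+1}f=g\cdot D_q\bigl((gD_q)^nf\bigr)$ as ``append one new entry'' to each inversion sequence. The base case $n=1$ is $(gD_q)f=\tg00\tf10$, matching the unique $e=(0)\in I_1$. (The case $n=2$, with $e=00$ and $e=01$ giving $\tg00\tg00\tf20$ and $\tg00\tg10\tf11$, can also be checked against the list of low-order iterates.) For the induction it is convenient to write the term attached to $e\in I_n$ as a product of $n+1$ factors, indexed by positions $0,1,\dots,n$. Position $0$ is $\tg00$. Position $j$, for $1\le j\le n-1$, is $\tg{k_j}{K_{j-1}}$, with $K_0=0$. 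Position $n$ is $f_k^{(K_{n-1})}$. Position $j\ge1$ is thus attached to the value $n-j$ of $e$: its subscript counts occurrences of $n-j$, and its superscript counts entries of $e$ strictly larger than $n-j$. Position $0$ is attached to the value $n$, which does not occur, and indeed has subscript and superscript $0$.

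Next, apply $D_q$ to this product using the $q$-Leibniz formula (Theorem~\ref{th:qLeibniz}), then multiply on the left by $g=\tg00$. The summand in which position $j$ is differentiated has three features:
\begin{itemize}
\item factors before $j$ are unchanged;
\item by the scaling relation \eqref{eq:scaling}, the factor at $j$ becomes $q^{b}$ times the same factor with subscript raised by one, where $b$ is its superscript;
\item every factor after $j$ has its superscript raised by one, since $p(xq)$ shifts $h^{(c)}$ to $h^{(c+1)}$.
\end{itemize}
Using this, we match the summand for position $j$ with the sequence $e'=(e_0,\dots,e_{n-1},v)\in I_{n+1}$ with $v=n-j$. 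Differentiating $f$ corresponds to appending $v=0$. Every $e'\in I_{n+1}$ arises exactly once this way, because $v$ ranges over $\{0,1,\dots,n\}$.

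The claim to verify is that the resulting product is exactly the term the theorem assigns to $e'$. The newly prepended $\tg00$ plays the role of the value $n+1$, and the old position $j$ becomes new position $j+1$, attached to the value $(n+1)-(j+1)=n-j$, as required. Raising the subscript at position $j$ records $|e'|_v=|e|_v+1$, which covers the case $v=0$ as well, since then $k$ increases. Raising all later superscripts by one records that every smaller value now has one more larger entry above it, namely $v$. Earlier superscripts are unaffected.

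Finally, the power of $q$ must match. The factor $q^{b}$ with $b=\sum_{s>v}|e|_s$ is precisely the number of new inversions $(i,n)$ with $e_i>v$ created by appending $v$. Hence $q^{\inv(e)}q^b=q^{\inv(e')}$. Summing over $e$ and $v$ gives the formula for $n+1$.

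The main obstacle is the index bookkeeping. We must check that the convention $k_j=|e|_{n-j}$, $K_j=k_1+\dots+k_j$ for length $n$ transforms correctly into the same convention for length $n+1$ after the prepended $\tg00$ shifts every index by one. We must also check the edge cases: differentiating position $0$, which corresponds to $v=n$, has $b=0$ and no new inversions, and differentiating $f$ corresponds to $v=0$. I would settle this by writing the exponent vector for $e'$ explicitly as $k'_{j+1}=k_j+[\,j=n-v\,]$ and $K'_{j+1}=K_j+[\,j\ge n-v\,]$, and then comparing factor by factor.
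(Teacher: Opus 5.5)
Your proposal is correct and follows the same route as the paper: induction on $n$, the $q$-Leibniz rule applied to $(gD_q)^n f$, and matching the summand in which the factor attached to value $v$ is differentiated with the extension $e'=(e_0,\dots,e_{n-1},v)\in I_{n+1}$. The paper's $W_1(e)$, $W_2(e,i)$ and $W_3(e)$ correspond to $v=n$, $v=n-i-1$ and $v=0$. Beyond that, you explicitly check that the scaling factor $q^{K_{j-1}}$ equals the number of new inversions, track the superscripts, and include the leading factor $g=g_0^{(0)}$ (which the paper's displayed $W$'s omit); the paper states these verifications without writing them out.
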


\begin{proof}
We proceed by induction on $n$. When $n=1$, the formula reduces to
\[(gD_q)f = \sum_{e\in I_1}q^0\tg{0}{0} f_1^{(0)} = \tg{0}{0} f_1^{(0)}.\]
 The base case holds. 

Assume that the formula holds for $n$. We prove it for $n+1$. Applying the $q$-Leibniz rule to $(gD_q)^nf$, we obtain 
\begin{align*}
(gD_q)^{n+1} f &= 
\sum_{e \in I_n } W_1(e)
+ \sum_{e \in I_n }\sum_{i=0}^{n-2} W_2(e,i) 
+ \sum_{e \in I_n } W_3(e),
\end{align*}
where
\begin{align*}
W_1(e) &= 
q^{\inv(e)}\cdot  (D_q(\tg{0}{0}))  
\tg{k_1}{1}  
\tg{k_2}{K_1+1}  
\cdots \tg{k_{n-1}}{K_{n-2}+1}  f_k^{(K_{n-1}+1)},\\[5pt]
W_2(e,i)&= 
q^{\inv(e)}\cdot  \tg{0}{0}  \tg{k_1}{0}   \cdots (D_q (g_{k_{i+1}}^{(K_i)}))  
\cdots \tg{k_{n-1}}{K_{n-2}+1}  f_k^{(K_{n-1}+1)},\\[5pt]
W_3(e)&=
 q^{\inv(e)}\cdot \tg{0}{0}\tg{k_1}{0}  \tg{k_2}{K_1}  \cdots \tg{k_{n-1}}{K_{n-2}}  (D_q(f_k^{(K_{n-1})})).
\end{align*}
For $e'\in I_{n+1}$, let
\begin{equation*}
W(e') =  q^{\inv(e')} \cdot \tg{0}{0}\tg{k_1'}{0}  \tg{k_2'}{K_1'}  \cdots \tg{k_{n}'}{K_{n-1}'}  f_k^{(K_{n}')},
\end{equation*}
where $k'=\noz(e')$, $k_j'=|e'|_{n+1-j}$, and $K_j' = k_1'+k_2'+\cdots+k_{j}'$.

Let $e = (e_0, e_1, \ldots, e_{n-1}) \in I_n$ be a given inversion sequence. By the definition of the operator $D_q$, together with the fundamental properties of inversion sequences, we verify that:

(1) Let $e' = (e_0, e_1, \ldots, e_{n-1} , n) \in I_{n+1}$, then 
$W_1(e)=W(e')$.

(2) For each $0\leq i \leq n-2$, let $e' = (e_0, e_1, \ldots, e_{n-1} , n-i-1) \in I_{n+1}$, then 
$W_2(e,i)=W(e')$.

(3) Let $e' = (e_0, e_1, \ldots, e_{n-1} , 0) \in I_{n+1}$, then 
$W_3(e)=W(e')$.

These three cases exhaust all possible extensions of $e$ to an inversion sequence in $I_{n+1}$, and the corresponding weights match exactly.  Hence,
$$
(gD_q)^{n+1} f = 
\sum_{e' \in I_{n+1} } W(e'),
$$
which completes the induction.
\end{proof}

We now turn to the $q$-analog of  $L_{n,k}$ in Comtet's expansion formula~\eqref{Ank-Comtet}.    Let the iterated operator be expressed as: 
\begin{equation}\label{def:Lnkq}
(gD_q)^n f = \sum_{k=1}^n L_{n,k}(q) \tf{k}{n-k},  \qquad (n\geq 1)
\end{equation}
where each $L_{n,k}(q)$   depends only on $g$ and is independent of $f$. The following theorem provides an explicit $q$-analog of Comtet's formula for $L_{n,k}(q)$.
\begin{Theorem}\label{th1:q-Comtet}
The explicit form of $L_{n,k}(q)$ is given by
\begin{equation}\label{eq:qComtet}
\frac{\tg{0}{0}}{[k]_q!} \sum [2-K_1]_q[3-K_2]_q \cdots [n-K_{n-1}]_q \frac{\tg{k_1}{0}}{[k_1]_q!}\frac{\tg{k_2}{K_1}}{[k_2]_q!} \cdots\frac{\tg{k_{n-1}}{K_{n-2}}}{[k_{n-1}]_q!},
\end{equation}
where the summation is over all sequences $(k_1,k_2,\ldots,k_{n-1})$ of nonnegative integers such that
$k_1+k_2+\cdots+k_{n-1}=n-k$ and $K_j=k_1+\cdots+k_j\leqslant j$ for any $1\leqslant j\leqslant n-1$.
\end{Theorem}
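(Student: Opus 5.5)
The plan is to derive the formula directly from Theorem~\ref{gD-inv} by grouping the inversion sequences according to their frequency vector, and then evaluating each group with Theorem~\ref{fix-inv}.

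\textbf{Step 1: grouping by frequency vector.} In Theorem~\ref{gD-inv}, the monomial attached to $e\in I_n$ is
\[
\tg{0}{0}\tg{k_1}{0}\tg{k_2}{K_1}\cdots\tg{k_{n-1}}{K_{n-2}}f_k^{(K_{n-1})},
\]
with $k_j=|e|_{n-j}$ and $k=|e|_0$. This monomial depends only on the frequency vector $(|e|_0,\dots,|e|_{n-1})$, not on $e$ itself. Since $k+K_{n-1}=n$, the last factor is exactly $f_k^{(n-k)}$. Comparing with \eqref{def:Lnkq} and reading off the coefficient of $\tf{k}{n-k}$ gives
\[
L_{n,k}(q)=\sum_{(k_1,\dots,k_{n-1})}\Bigl(\sum_{e} q^{\inv(e)}\Bigr)\,\tg{0}{0}\tg{k_1}{0}\cdots \tg{k_{n-1}}{K_{n-2}}.
\]
The outer sum runs over vectors of nonnegative integers with $K_{n-1}=n-k$. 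The inner sum runs over $e\in I_n$ with $|e|_{n-j}=k_j$ for $1\le j\le n-1$ and $|e|_0=k$.

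\textbf{Step 2: evaluating the inner sum.} Theorem~\ref{fix-inv} applies directly to the inner sum. Re-index its factors by $i=n-j$ and set $k_n:=k$, $K_0=0$. Since $\sum_{s>j}|e|_s=K_{i-1}$, the inner sum becomes
\[
\prod_{i=1}^{n}\qbinom{i-K_{i-1}}{k_i}.
\]

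\textbf{Step 3: telescoping the product.} Whenever every top entry is at least the corresponding bottom one, i.e.\ $K_i\le i$, each factor can be written as
\[
\frac{[i-K_{i-1}]_q!}{[k_i]_q!\,[i-K_i]_q!}.
\]
The product telescopes, because for each $i$ we have
\[
\frac{[i+1-K_i]_q!}{[i-K_i]_q!}=[i+1-K_i]_q .
\]
The boundary terms are $[1-K_0]_q!=1$ and $[n-K_n]_q!=[0]_q!=1$, using $K_n=n$. Hence the inner sum equals
\[
\frac{[2-K_1]_q[3-K_2]_q\cdots[n-K_{n-1}]_q}{[k_1]_q!\cdots[k_{n-1}]_q!\,[k]_q!}.
\]
Multiplying by $\tg{0}{0}\tg{k_1}{0}\cdots\tg{k_{n-1}}{K_{n-2}}$ gives exactly the summand of \eqref{eq:qComtet}.

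\textbf{Step 4 (main obstacle): the summation range.} The remaining work is to justify that the sum is over vectors with $K_j\le j$ for all $j$. If $K_j\le j$ for all $j\le n-1$, the telescoping above is valid and every factor is nonzero. Conversely, suppose some $K_j>j$ and take the least such $j$. Then $K_{j-1}\le j-1$, so the top entry $j-K_{j-1}$ is a nonnegative integer smaller than $k_j=K_j-K_{j-1}$. The $q$-binomial then vanishes, consistent with the fact that no inversion sequence has that frequency vector: there are $K_j>j$ entries of value at least $n-j$, but only $j$ positions can hold such entries. Such vectors therefore contribute zero, and restricting the sum to $K_j\le j$ completes the proof.
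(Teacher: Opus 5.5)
Your proposal is correct and follows essentially the same route as the paper: group the terms of Theorem~\ref{gD-inv} by frequency vector, evaluate each class with Theorem~\ref{fix-inv} after the reindexing $|e|_{n-j}=k_j$, and telescope $\prod_{i=1}^{n}\qbinom{i-K_{i-1}}{k_i}$. Your explicit telescoping, and your argument that vectors with some $K_j>j$ contribute nothing because no inversion sequence has such a frequency vector, supply details the paper leaves implicit.
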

  
\begin{proof}
Fix $1 \leq k \leq n$, and let $(k_1, k_2, \ldots, k_{n-1})$ be a sequence as specified in Theorem \ref{th1:q-Comtet}.
By \eqref{def:Lnkq} and Theorem \ref{gD-inv}, we obtain
\begin{equation}\label{eq:L:inve}
L_{n,k}(q) = \sum_{e \in I_n} q^{\inv(e)} \cdot \tg{0}{0}\tg{k_1}{0}\tg{k_2}{K_1} \cdots \tg{k_{n-1}}{K_{n-2}},
\end{equation}
where the summation ranges over all inversion sequences $e \in I_n$ such that $\noz(e)=k$ and $k_j = |e|_{n-j}$ for each $j$. Hence, this summation can be evaluated by applying Theorem \ref{fix-inv}. 
Substituting $|e|_j$ by $k_{n-j}$ in the right-hand side of \eqref{fixed-freq}, we obtain
\begin{align*}
\prod_{j=0}^{n-1} \qbinom{n-j - \sum_{s=j+1}^{n-1} |e|_s}{|e|_{j}}
&= \prod_{j=1}^{n} \qbinom{j - K_{j-1}}{k_{j}} \\
&= \frac{[2-K_1]_q[3-K_2]_q \cdots [n-K_{n-1}]_q}{[k]_q![k_1]_q!\cdots [k_{n-1}]_q!}.
\end{align*}
This identity implies \eqref{eq:qComtet}, which completes the proof.
\end{proof}

While Theorem \ref{th1:q-Comtet} gives a closed form for $L_{n,k}(q)$, a recurrence relation is often also convenient for applications, particularly when $g(x)$ and $f(x)$ are chosen as specific functions. The following proposition establishes such a recurrence.

\begin{Proposition} For $n\geq 0$, the coefficient $L_{n,k}(q)$ satisfies the following recurrence relations: 
\begin{align*}
L_{0,0}(q)&=1,\\[5pt]
L_{n+1, 1}(q) &=g D_qL_{n,1}(q),\\[5pt]
L_{n+1, n+1}(q) &=g L_{n,n}(q),\\[5pt]
L_{n+1, k}(q) &=g D_qL_{n,k}(q) + gq^{n-k+1} L_{n,k-1}(q), \text{\ for\ } 1<k\leq n. 
\end{align*}
\end{Proposition}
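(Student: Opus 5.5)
The plan is to apply one more operator $gD_q$ to the defining expansion \eqref{def:Lnkq} and compare coefficients of $\tf{k}{n+1-k}$. The comparison is legitimate because the $L_{n,k}(q)$ are independent of $f$, so the $\tf{k}{n+1-k}$ behave as independent symbols. The case $n=0$ is the convention $(gD_q)^0f=f=\tf00$, giving $L_{0,0}(q)=1$.

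Write
\[
(gD_q)^{n+1}f=g\,D_q\sum_{k=1}^n L_{n,k}(q)\,\tf{k}{n-k}.
\]
For each term, use the two-factor case of the $q$-Leibniz formula (Theorem~\ref{th:qLeibniz}), taking $p_1=\tf{k}{n-k}$ and $p_2=L_{n,k}(q)$. This gives
\[
D_q\bigl(\tf{k}{n-k}L_{n,k}(q)\bigr)=\bigl(D_q\tf{k}{n-k}\bigr)L_{n,k}(q)+\tf{k}{n-k}(x)\,D_qL_{n,k}(q).
\]
Here the key point is the ordering of the factors. The factor differentiated first must be evaluated at $x$ in the second term, and $L_{n,k}$ must be left unshifted in the first term. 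The alternative ordering would produce the argument $xq$ in the wrong place, so it should not be used. By the scaling relation \eqref{eq:scaling}, $D_q\tf{k}{n-k}=q^{n-k}\tf{k+1}{n-k}$, and $\tf{k+1}{n-k}=\tf{k+1}{(n+1)-(k+1)}$ has exactly the shift required at level $n+1$.

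The second term, however, carries $\tf{k}{n-k}$, whereas level $n+1$ requires $\tf{k}{n+1-k}$. This mismatch is the main obstacle. I would first check it against the explicit computation of $(gD_q)^2 f$, where the term $\tg00\tg10\tf11$ carries the shift $1=n+1-k$ for $n+1=2$, $k=1$. That shift arises because Theorem~\ref{th:qLeibniz} shifts all later factors, so the product must be ordered with $L_{n,k}(q)$ first and $\tf{k}{n-k}$ second. With $p_1=L_{n,k}$ and $p_2=\tf{k}{n-k}$, the formula gives
\[
(D_qL_{n,k})\,\tf{k}{n-k}(xq)+L_{n,k}\,q^{n-k}\tf{k+1}{n-k},
\]
and $\tf{k}{n-k}(xq)=\tf{k}{n-k+1}$, which is exactly the required shift. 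Both terms therefore land on the correct basis elements, and I will use this ordering.

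It then remains to multiply by $g$ and collect terms. The coefficient of $\tf{k}{n+1-k}$ is
\[
g\,D_qL_{n,k}(q)+g\,q^{n-k+1}L_{n,k-1}(q),
\]
where the second summand comes from the index $k-1$, which contributes the factor $q^{n-(k-1)}$. The boundary cases follow from the same collection. For $k=1$ there is no $L_{n,0}$ term, which gives $L_{n+1,1}=gD_qL_{n,1}$. For $k=n+1$ there is no $L_{n,n+1}$ term, and the power is $q^{n-n}=1$, which gives $L_{n+1,n+1}=gL_{n,n}$.

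As a sanity check, I would compare the result with the listed expansion of $(gD_q)^3f$. In particular, the coefficient of $\tf21$ should equal $g D_q(\tg00\tg00)+q\,g\,\tg00\tg10$. Expanding $D_q(\tg00\tg00)$ by Theorem~\ref{th:qLeibniz} gives $\tg10\tg00+\tg00\tg11$, which would also confirm the shift conventions.
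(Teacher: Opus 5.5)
Your final argument is exactly the paper's proof. You apply Theorem~\ref{th:qLeibniz} with $L_{n,k}(q)$ as the first factor, so that $\tf{k}{n-k}(xq)=\tf{k}{n-k+1}$, use \eqref{eq:scaling} on the other term, shift $k\to k-1$, and compare coefficients of $\tf{k}{n+1-k}$. Three side details are off, none affecting that argument:

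\begin{itemize}
\item Your first display drops a shift: with $\tf{k}{n-k}$ as the first factor, $L_{n,k}(q)$ must be evaluated at $xq$.
\item Your sanity check should read $D_q(\tg00\tg00)=\tg10\tg01+\tg00\tg10$, not $\tg10\tg00+\tg00\tg11$. Only the corrected form reproduces the listed coefficient $(1+q)\tg00\tg00\tg10+\tg00\tg10\tg01$ of $\tf21$.
\item The ``no $L_{n,0}$ term'' argument for $k=1$ needs $n\ge1$. At $n=0$ one has $L_{0,0}=1$ and $L_{1,1}=gL_{0,0}=g$, so the $k=1$ line of the statement should really be read for $n\ge1$. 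The paper's proof, which starts from the sum over $1\le k\le n$, has the same implicit restriction.
\end{itemize}
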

\begin{proof}

We proceed by applying the operator $gD_q$ to the expansion
\[
(gD_q)^n f = \sum_{k=1}^n L_{n,k}(q) \,\tf{k}{n-k}.
\]
Using the $q$-Leibniz rule, we obtain
\begin{align*}
&gD_q \Bigl( \sum_{k=1}^n L_{n,k}(q) \,\tf{k}{n-k} \Bigr)\\[5pt]
&= g \sum_{k=1}^n D_q L_{n,k}(q) \,\tf{k}{n-k+1} 
+ g \sum_{k=1}^n L_{n,k}(q) D_q \tf{k}{n-k}\\[5pt]
&= g \sum_{k=1}^n D_q L_{n,k}(q) \,\tf{k}{n-k+1} 
+ g \sum_{k=1}^n L_{n,k}(q)q^{n-k}\tf{k+1}{n-k}\\[5pt]
&= g \sum_{k=1}^n D_q L_{n,k}(q) \,\tf{k}{n-k+1} 
+ g \sum_{k=2}^{n+1} L_{n,k-1}(q)q^{n-k+1}\tf{k}{n-k+1}.
\end{align*}
By definition, we have
\[
(gD_q)^{n+1} f = \sum_{k=1}^{n+1} L_{n+1,k}(q) \,\tf{k}{n+1-k}.
\]
By comparing the coefficients of $\tf{k}{n+1-k}$ in the preceding expansion, 
we obtain the desired recurrence relations for $L_{n,k}(q)$.
\end{proof}

\subsection{Applications for special choices of $g(x)$ and $f(x)$} 

In this subsection, we obtain $q$-Scherk identity and illustrate how it leads naturally to several classical $q$-identities under suitable specializations of $f(x)$.

Let \( g(x) = x \). For \( j \geq 0 \), we have 
$$
g_{i}^{(j)}=
\begin{cases}
    q^jx, & i=0,\\[5pt]
    1, & i=1,\\[5pt]
    0, &i\geq 2.
\end{cases}
$$
Substituting $g_i^{(j)}$ into the recurrence of  $L_{n,k}(q)$  and using $D_qx^k = [k]_qx^{k-1}$, we obtain
$L_{n,k}(q) = x^k \hat{S}_q(n,k)$,
where $\hat{S}_q(n,k)$ satisfies 
the same recurrence and initial values as $S_q(n,k)$ defined in Section 4.  
Consequently, we obtain the following $q$-Scherk identity: 
\begin{Theorem}[$q$-Scherk]\label{th:q-Scherk}
For $n\geq 1$, 
    \begin{equation}\label{eq:q-Scherk}
        (xD_q)^nf=\sum_{k=1}^{n} S_{q}(n,k)x^kf_k^{(n-k)}.
    \end{equation}
\end{Theorem}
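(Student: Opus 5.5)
The plan is to specialize the recurrence for $L_{n,k}(q)$ proved in the preceding proposition to $g(x)=x$. The goal is to show by induction on $n$ that $L_{n,k}(q)=x^k S_q(n,k)$ for all $1\le k\le n$. Once this is established, substituting into the defining expansion \eqref{def:Lnkq} gives \eqref{eq:q-Scherk} directly.

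For the induction, first record the ingredients. With $g=x$, one has $g\,D_q(x^k)=x\,[k]_q x^{k-1}=[k]_q x^k$, and $g\cdot x^{k-1}=x^k$.

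The base case is $n=1$. Here $(xD_q)f=x f_1^{(0)}$ by the first computed iteration, so $L_{1,1}=x=x^1S_q(1,1)$.

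For the inductive step, assume $L_{n,j}(q)=x^jS_q(n,j)$ for all $j$.
\begin{itemize}
\item For $1<k\le n$, the recurrence gives
\[
L_{n+1,k}(q)=[k]_q x^k S_q(n,k)+q^{n-k+1}x^k S_q(n,k-1).
\]
This equals $x^k S_q(n+1,k)$ by the defining recurrence $S_q(n+1,k)=q^{(n+1)-k}S_q(n,k-1)+[k]_qS_q(n,k)$.
\item For $k=1$, we get $L_{n+1,1}=x D_q(x S_q(n,1))=xS_q(n,1)$. This matches $S_q(n+1,1)=[1]_qS_q(n,1)$, using $S_q(n,0)=0$ for $n\ge1$.
\item For $k=n+1$, we get $L_{n+1,n+1}=x\cdot x^nS_q(n,n)$. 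This matches $S_q(n+1,n+1)=q^0S_q(n,n)$.
\end{itemize}

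The only point needing care is the boundary cases. One must check that the edge recurrences for $L_{n+1,1}$ and $L_{n+1,n+1}$ coincide with the general $S_q$ recurrence, using $S_q(n,0)=0$ and $S_q(n,n+1)=0$. One must also confirm the exponent $q^{n-k+1}$ arising from the scaling relation \eqref{eq:scaling} matches the $q^{(n+1)-k}$ in the definition of $S_q$, which it does.

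As a cross-check, one could alternatively use Theorem~\ref{gD-inv}. With $g=x$, only inversion sequences whose nonzero entries are pairwise distinct contribute, since $g_i=0$ for $i\ge2$. Each factor $g_0^{(K_{j-1})}=q^{K_{j-1}}x$ then supplies exactly the extra inversions against the excluded set $R$, which recovers the augmented-sequence interpretation of $S_q(n,k)$ via ${\rm Inv}$. This route is more delicate bookkeeping, so I would present the recurrence argument as the proof.
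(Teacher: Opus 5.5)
Your proposal is correct and follows the paper's route. Like the paper, you specialize the recurrence for $L_{n,k}(q)$ to $g(x)=x$, use $D_q x^k=[k]_q x^{k-1}$, and observe that $x^{-k}L_{n,k}(q)$ satisfies the same recurrence and initial values as $S_q(n,k)$. Your explicit boundary checks at $k=1$ and $k=n+1$, using $S_q(n,0)=0$ and $S_q(n,n+1)=0$, fill in what the paper leaves implicit. Your sketched cross-check via Theorem~\ref{gD-inv} and the statistic ${\rm Inv}$ on augmented sequences is also sound.
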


Consider the function \( f(x) = \frac{1}{1 - x} \). A direct computation shows 
$$
(xD_q)^n \frac{1}{1-x} 
= \sum_{\ell \geq 0} [\ell]_q^n x^\ell 
	= x\sum_{\ell \geq 1} [\ell]_q^n x^{\ell -1}.
$$
On the other hand,  the Euler-Mahonian polynomial 
$$
A_n(x,q) = \sum_{\sigma \in \mathfrak{S}_n} x^{\des(\sigma)} q^{\maj(\sigma)}
$$	
satisfies the identity of  Carlitz, namely, 
\begin{equation}\label{eq:Carlitz}
\frac{A_n(x,q)}{(x;q)_{n+1}} = \sum_{\ell \ge 0} x^\ell \big([\ell+1]_q\big)^n.
\end{equation}
Comparing these expressions yields
\begin{Theorem}\label{th:EuMa}
For $n\geq 0$, 
$$
(xD_q)^n \frac{1}{1-x} = \frac{xA_n(x,q)}{(x;q)_{n+1}}  .
$$
\end{Theorem}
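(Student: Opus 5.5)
The plan is to compute $(xD_q)^n \frac{1}{1-x}$ directly as a power series, and then match it against the right-hand side of Carlitz's identity \eqref{eq:Carlitz}.

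\textbf{Step 1: the operator on monomials.} Write $\frac{1}{1-x}=\sum_{\ell\ge 0}x^\ell$. Since $D_q x^\ell=[\ell]_q x^{\ell-1}$, we get $xD_q x^\ell=[\ell]_q x^\ell$. So each monomial is an eigenvector of $xD_q$, with eigenvalue $[\ell]_q$.

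\textbf{Step 2: apply it termwise.} Iterating $n$ times term by term gives
\[
(xD_q)^n \frac{1}{1-x}=\sum_{\ell\ge0}[\ell]_q^n x^\ell .
\]
This is legitimate in the ring of formal power series in $x$, because $D_q$ acts coefficientwise. For $n\ge1$ the $\ell=0$ term vanishes, since $[0]_q=0$. Reindexing with $\ell\mapsto \ell+1$ then yields $x\sum_{\ell\ge0}[\ell+1]_q^n x^\ell$.

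\textbf{Step 3: invoke Carlitz.} By \eqref{eq:Carlitz}, the series $\sum_{\ell\ge0}[\ell+1]_q^n x^\ell$ equals $A_n(x,q)/(x;q)_{n+1}$. Multiplying by $x$ gives the claim for $n\ge1$.

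\textbf{Step 4: the case $n=0$.} Here the left side is $\frac{1}{1-x}$. On the right, $A_0=1$ and $(x;q)_1=1-x$, so the right side is $\frac{x}{1-x}$. These disagree by the constant term. The reason is that when $n=0$ the $\ell=0$ term $[0]_q^0x^0=1$ does not vanish, so the reindexing in Step 2 loses it. I would therefore treat $n=0$ as a separate convention and state the identity for $n\ge1$, noting that the $n=0$ discrepancy comes exactly from $[0]_q^0=1$.

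There is no real obstacle beyond this. Carlitz's identity is taken as known. If a self-contained argument were wanted, one could instead use Theorem~\ref{th:q-Scherk} with $f_k^{(n-k)}=[k]_q!\,/(xq^{n-k};q)_{k+1}$. However, turning that sum into $A_n$ would itself essentially reprove Carlitz's identity, so I would rely on the direct series computation above.
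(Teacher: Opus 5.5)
Your proof is correct and matches the paper's argument: expand $(xD_q)^n\frac{1}{1-x}=\sum_{\ell\ge0}[\ell]_q^n x^\ell$ termwise, shift the index, and compare with Carlitz's identity \eqref{eq:Carlitz}. Your $n=0$ remark is also right. With $A_0=1$ the stated identity becomes $\frac{1}{1-x}=\frac{x}{1-x}$, which is false, and the paper's step $\sum_{\ell\ge0}[\ell]_q^n x^\ell=x\sum_{\ell\ge1}[\ell]_q^n x^{\ell-1}$ tacitly needs $n\ge1$, so the theorem should be stated for $n\ge1$, as the corollary that follows it already is.
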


Applying the $q$-Scherk identity \eqref{eq:q-Scherk} and using 
$$
D_q^jf(x) = \frac{[j]_q!}{(x;q)_{j+1}},
$$
we have 
\begin{align*}
(xD_q)^n \frac{1}{1-x}
&=\frac{\sum_{j=1}^{n} S_q(n,j) x^{j}(x;q)_{n-j} [j]_q!}{(x;q)_{n+1}}.
\end{align*}
Comparing with Theorem \ref{th:EuMa}, we arrive at the identity.
\begin{Theorem}
For $n\geq 1$, we have 
	$$\sum_{j=1}^{n} S_q(n,j) x^{j}(x;q)_{n-j} [j]_q! = x A_n(x,q).$$
\end{Theorem}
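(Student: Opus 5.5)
The identity follows by comparing two expressions for $(xD_q)^n \frac{1}{1-x}$ over the common denominator $(x;q)_{n+1}$. Theorem \ref{th:EuMa} already evaluates the operator as $\frac{xA_n(x,q)}{(x;q)_{n+1}}$. It therefore suffices to show that the $q$-Scherk expansion of Theorem \ref{th:q-Scherk}, applied to $f(x)=\frac{1}{1-x}$, equals $\frac{1}{(x;q)_{n+1}}\sum_{j=1}^n S_q(n,j)x^j(x;q)_{n-j}[j]_q!$. Clearing the common denominator then gives the claim.

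\textbf{Higher $q$-derivatives.} First I verify $D_q^j \frac{1}{1-x} = \frac{[j]_q!}{(x;q)_{j+1}}$ by induction on $j$. Write $(x;q)_{m}=(1-x)(1-xq)\cdots(1-xq^{m-1})$. Then
$$D_q \frac{1}{(x;q)_{m}} = \frac{1}{x(1-q)}\Big(\frac{1}{(x;q)_m}-\frac{1}{(xq;q)_m}\Big).$$
Over the common denominator $(x;q)_{m+1}$ the numerator is $(1-xq^m)-(1-x) = x(1-q^m)$. Hence the derivative equals $\frac{[m]_q}{(x;q)_{m+1}}$, and the induction closes.

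\textbf{The scaled derivatives.} Next I compute $f_j^{(n-j)}(x)=f_j(xq^{n-j})=\frac{[j]_q!}{(xq^{n-j};q)_{j+1}}$. The factorization $(x;q)_{n+1}=(x;q)_{n-j}\,(xq^{n-j};q)_{j+1}$ gives
$$f_j^{(n-j)}(x)=\frac{[j]_q!\,(x;q)_{n-j}}{(x;q)_{n+1}}.$$
Substituting this into \eqref{eq:q-Scherk} yields
$$(xD_q)^n\frac{1}{1-x}=\frac{\sum_{j=1}^n S_q(n,j)x^j(x;q)_{n-j}[j]_q!}{(x;q)_{n+1}}.$$

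\textbf{Comparison.} Equating this with Theorem \ref{th:EuMa} and multiplying both sides by $(x;q)_{n+1}$ gives the stated identity.

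The only genuinely nontrivial input is Theorem \ref{th:EuMa}, which rests on Carlitz's identity \eqref{eq:Carlitz} together with $(xD_q)^n\frac{1}{1-x}=\sum_{\ell\ge0}[\ell]_q^n x^\ell$. The latter holds because $xD_q x^\ell=[\ell]_q x^\ell$ termwise; reindexing $\ell\to\ell+1$ then matches the factor $x$. Both are available from the paper, so the main point of care is the bookkeeping with the scaling shift $q^{n-j}$ in $f_j^{(n-j)}$. The factorization of $(x;q)_{n+1}$ handles it cleanly.
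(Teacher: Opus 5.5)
Your proof is correct and follows the paper's route exactly. You apply the $q$-Scherk identity to $f(x)=\frac{1}{1-x}$ using $D_q^j f=[j]_q!/(x;q)_{j+1}$, then compare with Theorem \ref{th:EuMa} and clear the denominator $(x;q)_{n+1}$, additionally supplying the induction for the higher $q$-derivatives and the factorization $(x;q)_{n+1}=(x;q)_{n-j}\,(xq^{n-j};q)_{j+1}$ that the paper leaves implicit.
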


\noindent \textbf{Remark.} Garsia \cite[Equation (I.16)]{Garsia-1979} proved algebraically that 
$$
\sum_{j=1}^{n} \widetilde{S}_q(n,j) x^{j}(xq^{j+1};q)_{n-j} [j]_q! = x A_n(x,q),
$$
where $\widetilde{S}_q(n,j)$ is the $q$-Stirling numbers of the second kind satisfying 
$$
\widetilde{S}_q(n+1,j) =[j]\widetilde{S}_q(n,j)+q^{j-1}\widetilde{S}_q(n,j-1) 
$$
with the initial value $\widetilde{S}_q(1,1)=1$, defined by Milne \cite{Milne-1982}. Here, we note that 
\[S_q(n,j) = q^{(j-1)(2n-j)/2}\widetilde{S}_{1/q}(n,j).\]

Continuing to apply the $q$-Scherk identity to the monomial $x^{k}$, we obtain  
     \begin{align*}
    (xD_q)^n x^k 
    	&=\sum_{j=1}^{n} S_q(n,j) x^{j}[k]_q[k-1]_q \cdots [k-j+1]_q\ 
    	(q^{n-j}x)^{k-j}.
    \end{align*}
    Summing over $k\ge 0$ and comparing coefficients of $x^k$ yields the following identity.
\begin{Theorem} For $n\geq 1$,
$$
	[k]_q^n =
	\sum_{j=1}^{n} S_q(n,j)   [k]_q[k-1]_q \cdots [k-j+1]_q\ q^{(n-j)(k-j)}.
$$
\end{Theorem}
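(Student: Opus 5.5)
The plan is to apply the $q$-Scherk identity \eqref{eq:q-Scherk} (Theorem~\ref{th:q-Scherk}) to the single monomial $f(x)=x^k$ for fixed $k\ge 0$, and then compare coefficients of $x^k$. Summing over $k$ is not needed: both sides will be scalar multiples of $x^k$, so the identity is read off directly.

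\textbf{Left-hand side.} Since $D_q x^k=[k]_q x^{k-1}$, we have $xD_q x^k=[k]_q x^k$. So $x^k$ is an eigenfunction of $xD_q$, and induction gives $(xD_q)^n x^k=[k]_q^n x^k$.

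\textbf{Right-hand side.} Iterating $D_q x^m=[m]_q x^{m-1}$ gives
\[
f_j(x)=D_q^j x^k=[k]_q[k-1]_q\cdots[k-j+1]_q\,x^{k-j}.
\]
By the definition $f_j^{(n-j)}(x)=f_j(q^{n-j}x)$, this becomes $[k]_q\cdots[k-j+1]_q\,q^{(n-j)(k-j)}x^{k-j}$. Multiplying by $x^j$ gives exactly $[k]_q\cdots[k-j+1]_q\,q^{(n-j)(k-j)}\,x^k$ for every $j$. Summing with weights $S_q(n,j)$ and equating with $[k]_q^n x^k$ yields the claimed identity after cancelling $x^k$.

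\textbf{Main obstacle: the range $j>k$.} Here $x^{k-j}$ has a negative exponent and $q^{(n-j)(k-j)}$ may be unusual. I will handle this by observing that for $j>k$ the product $[k]_q[k-1]_q\cdots[k-j+1]_q$ contains the factor $[0]_q=0$. This is consistent with $D_q^j x^k=0$ for $j>k$, so those terms vanish on both the operator side and the formula side. The same observation covers $k=0$, where both sides are $0$ because $n\ge 1$. With this, the comparison is a termwise identity of polynomials in $q$, and no analytic issues arise.
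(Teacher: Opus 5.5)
Your proposal is correct and follows the paper's own argument. You apply the $q$-Scherk identity to $f(x)=x^k$, use $xD_q x^k=[k]_q x^k$ on the left and $f_j^{(n-j)}(x)=[k]_q\cdots[k-j+1]_q\,q^{(n-j)(k-j)}x^{k-j}$ on the right, and compare coefficients of $x^k$. Your explicit check that the $j>k$ terms vanish through the factor $[0]_q$ is a detail the paper leaves implicit, and you are right that no summation over $k$ is actually needed.
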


\noindent \textbf{Remark.} Leroux and M\'edicis (\cite[Theorem 4.3]{Medicis-Leroux-1993}) obtained a related expansion 
 $$
	[k]_q^n =
	\sum_{j=1}^{n} S_q^{*}(n,j)   [k]_q[k-1]_q \cdots [k-j+1]_q\  q^{\frac{j(j-1)}{2}},
$$
 where the $q$-Stirling number of the second kind $S_q^{*}(n,k)$ satisfies 
\begin{equation}
    S_q^{*}(n+1,k)=[k]_qS_{q}^{*}(n,k)+S_q^{*}(n,k-1).
\end{equation}
These two families are related by \( S_q(n,k) = q^{(k-1)(n-k)}S_{1/q}^{*} (n,k) \).

\bibliographystyle{plain}
%\bibliography{invseq}
%\bibliography{invseq.bib}

\end{document}